\def\ws{\boxplus}
\def\ns{\oplus}
\def\wsns{\boxplus\oplus}
\def\nimber{\mathcal{N}}
\theoremstyle{plain}
\newtheorem{theorem}{Theorem}
\newtheorem{lemma}{Lemma}
\newtheorem{conjecture}{Conjecture}
\newtheorem{corollary}{Corollary}
\newtheorem{question}{Question}
\newtheorem{definition}{Definition}
\title{The Sprague-Grundy function for some selective compound games}
\author{Calvin Beideman, Matthew Bowen, Necati Alp Müyesser}
\date{February 15, 2018}
\begin{document}

\maketitle
\begin{abstract}
    We analyze the Sprague-Grundy functions for a class of almost disjoint selective compound games played on Nim heaps.  Surprisingly, we find that these functions behave chaotically for smaller Sprague-Grundy values of each component game yet predictably when any one heap is sufficiently large.
\end{abstract}

\section{Introduction}
In this paper we concern ourselves with two-player impartial combinatorial games under normal play. Thus the games we consider are perfect-information, both players are allowed the same set of moves given the same configuration of the game board, and the game eventually terminates. The player whose move terminates the game wins. From now on, we simply refer to these as games. For an overview of such games see \cite{winningways}.
\par Games can be modelled by a directed graph $(V, E)$ which we call the game tree. $V$ denotes the set of game states, whereas an edge $(v_1, v_2)$ denotes the existence of a move from state $v_1$ to state $v_2$. The leafs of the tree are then the terminal positions. It follows by easy induction on the game tree that from every position, either $P1$ or $P2$ has a winning strategy. Given a game $G=(V,E)$, the Sprague-Grundy (SG) function $\nimber:V\rightarrow\mathbb{N}$ generalizes this partition. From $v\in V$, the player who is about to play has a winning strategy if and only if $\nimber(v)\neq 0$. We usually call the Sprague-Grundy value of a game-state $v$ its \textit{nimber}.
\par A lot of our results build on the following recursive definition of the Sprague-Grundy function:
\begin{definition}
  Let $G=(V, E)$ be a game. If $v\in V$ is terminal, $\nimber(v)=0$. Otherwise, $\nimber(v)=mex\,\{\,\nimber(v')\,|\,(v,v')\in E\}$, where $mex$ denotes the minimum excluded value of a set in $\mathbb{N}$.
\end{definition}
\vspace{5mm}

In \textit{On Numbers and Games} \cite{numbersandgames} Conway suggests three potential rules for moving in compound games where games $G$ and $H$ are played simultaneously:
\begin{itemize}
    \item The disjunctive compound, denoted $G\ns{H}$.  Here players make a legal move in either $G$ or $H$ on their turn.
    \item The selective compound, denoted $G\ws{H}$.  Here on a player's turn they select either $G$, $H$, or both and makes legal moves in the ones selected.
    \item The conjunctive compound, where players always make legal moves in both component games.
\end{itemize}

\par Given enough information about each of the component games the Sprague-Grundy theorem makes it easy to determine the $SG$-function $\nimber{}$ for the disjunctive sum of two games: $\nimber{(G\ns{H})}=\nimber{(G)}\ns \nimber{(H)}$, where the second $\ns$ denotes the bitwise xor operation on $\nimber{(G)}$ and $\nimber{(H)}$. As an example, by $*k$ we denote the game of a Nim pile with $k$ stones. A valid move is to remove an arbitrary amount of stones from the pile. Then clearly by Definition 1, $\nimber(*k)=k$. One pile Nim is not a very interesting game; however, $(*k)\ns(*l)\ns(*m)$ can be easily navigated by computing nimbers, even though there isn't an intuitive winning strategy always.

\par The $SG$-function of selective compound games, however, is not characterized by the nimbers of its component games: for example $\nimber{(*1\ws{*0})}=1\neq \nimber{(*1\ws{(*1\ns *1)})}=3$ even though the nimbers of the component games agree.  In fact, even for games as simple as these determining the $SG$-function can be rather complicated.   In 2015 Boros et. al. \cite{exconim} gave a partial analysis of $\nimber{(*a\ws{(*b\ns *c)})}$ and noted that this function behaves rather chaotically.  We continue this analysis by proving some of the conjectures presented in \cite{exconim} as well as extending results to the game $\nimber{(*x_1\ws{(*x_2\ns...\ns *x_n}))}$.  We call this game \textit{Auxiliary Nim} and more generally, for a given game $G$ we call the game $*k\ws G$ \textit{Auxiliary G}.
\par A lower bound and an upper bound can easily by derived for the nimber of a Auxiliary Nim game. We show the following bounds in Corollary \ref{thm:easyBounds}:
$$x_1+(x_2\ns x_3 \ns \cdots \ns x_n)\leq\nimber(x_1,x_2,x_3, \cdots, x_n)\leq x_1+x_2+x_3+\cdots+x_n$$
\par Two of our main results characterize when these extreme points are realized.

\vspace{2mm}

\textbf{Question 1: }Under which circumstances $\nimber{(*x_1\ws{(*x_2\ns...\ns *x_n})}=x_1$, the lowest achievable value by Corollary \ref{thm:easyBounds}?\vspace{1mm}
\par Theorem \ref{thm:firstTheorem} completely answers this question:

\begin{theorem}\label{thm:firstTheorem}
$\nimber(*x_1\ws (*x_2\ns \cdots \ns *x_n))=x_1 \Leftrightarrow$ $ (*x_2\ns \cdots \ns *x_n)=0$ and $ 2^{\lfloor \log_2 x_1\rfloor + 1}$ divides all of $x_2,x_3, \cdots, x_n$.
\end{theorem}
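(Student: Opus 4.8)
The plan is to fix notation $k=\lfloor \log_2 x_1\rfloor$ (so that $2^k\le x_1<2^{k+1}$) and $S=x_2\ns\cdots\ns x_n$, and to read each turn from a position $(x_1,\dots,x_n)$ as one of three move types: (T1) decrease $x_1$ alone; (T2) decrease a single auxiliary heap $x_i$ alone; (T3) decrease $x_1$ together with a single auxiliary heap $x_i$. The necessity of the first condition is then immediate from Corollary \ref{thm:easyBounds}: if $\nimber(x_1,\dots,x_n)=x_1$ then $x_1\ge x_1+S$, forcing $S=0$. From here on I would assume $S=0$ and reduce the theorem to showing that $\nimber=x_1$ if and only if $2^{k+1}$ divides every $x_i$.

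For the direction that divisibility suffices, I would argue by strong induction on $x_1$, using one elementary bit-separation observation: if $2^{k+1}\mid x_i$ and $0\le x_i'<x_i$, then $x_i$ has all of its lowest $k+1$ bits equal to $0$, so any decrease must flip a bit in some position $\ge k+1$, whence $x_i\ns x_i'\ge 2^{k+1}>x_1$. Consequently every T2 and every T3 successor has auxiliary Nim-value $x_i\ns x_i'\ge 2^{k+1}$, and Corollary \ref{thm:easyBounds} then bounds its nimber below by $2^{k+1}>x_1$; in particular no such successor has nimber $x_1$. The T1 successors, on the other hand, are exactly the positions $(t,x_2,\dots,x_n)$ for $0\le t<x_1$, each still having auxiliary Nim-sum $0$, and each auxiliary heap divisible by $2^{k+1}$ and hence by $2^{\lfloor\log_2 t\rfloor+1}$; the inductive hypothesis (with the base $t=0$ being pure Nim of value $0$) gives nimber exactly $t$. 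Thus the set of successor nimbers contains $\{0,1,\dots,x_1-1\}$ but omits $x_1$, so $\nimber=\mathrm{mex}=x_1$, completing this direction.

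For the converse I must show that if $S=0$ but some $x_i$ is not divisible by $2^{k+1}$ then $\nimber>x_1$. Since Corollary \ref{thm:easyBounds} already gives $\nimber\ge x_1$, and hence that $0,\dots,x_1-1$ all occur as successor nimbers, it is enough to exhibit a single successor of nimber exactly $x_1$. When some auxiliary heap has bit $k$ set this is easy: the T3 move sending $x_1\to 0$ and that heap $x_i\to x_i\ns x_1<x_i$ lands on a pure-Nim position of Nim-value $S\ns x_1=x_1$, so its nimber is $x_1$.

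The step I expect to be the real obstacle is the complementary sub-case, in which divisibility fails only through bits strictly below position $k$, so that no auxiliary heap carries bit $k$ and the pure-Nim witness is unavailable. Here a single move cannot in general reach a position where the lower and upper bounds of Corollary \ref{thm:easyBounds} coincide, since the remaining heaps need not be pairwise bit-disjoint, so the two bounds alone do not pin the witness's nimber to $x_1$. My plan is to let $p<k$ be the highest bit position below $k$ carried by some auxiliary heap, exploit $S=0$ to pair the (evenly many) heaps carrying bit $p$, and make a T3 move clearing bit $p$ in one of them while decreasing $x_1$ by the matching amount, so that the successor again has auxiliary Nim-sum $0$ one scale lower; I would then invoke the inductive hypothesis, or a separate exact-evaluation lemma for auxiliary-nonzero positions of the kind the paper develops for the ``one large heap'' regime, to certify that this successor has nimber exactly $x_1$. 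Verifying this pairing-and-descent construction for arbitrarily many heaps, rather than merely confirming it on small cases such as $(2,1,1)$ and $(4,1,1,2,2)$, is where the bulk of the difficulty will lie.
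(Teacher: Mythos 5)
Your treatment of the $S=0$ necessity and of the $(\Leftarrow)$ direction matches the paper's proof essentially step for step, and your bit-$k$ witness is sound: the move $x_1\to 0$, $x_i\to x_i\ns x_1$ is exactly the paper's move $x_i' = x_i - x_1$ with the first pile eliminated. But the sub-case you flag as the ``real obstacle'' is a genuine gap, and the pairing-and-descent construction you sketch is not merely unfinished --- it is the wrong tool. After clearing bit $p$ in one heap, the successor has \emph{nonzero} auxiliary Nim-sum, so it lies outside the scope of your inductive hypothesis, which is a statement only about positions with auxiliary sum $0$, and even there yields only the dichotomy ``nimber equals $x_1'$ or does not,'' never an exact value. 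No exact-evaluation lemma of the kind you hope to borrow exists for such positions when $x_1$ is small: the paper's exact evaluations (Theorem \ref{thm:same-char}, Theorem \ref{thm:conjectureone}) require either a large first pile or special bit structure, and indeed the chaotic behavior documented in Figure \ref{fig:fig} is precisely the obstruction to pinning down the witness's nimber by hand.

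The fix --- and the paper's actual route --- is to never hunt for a witness in this sub-case. Observe that if no auxiliary heap carries bit $k$, then divisibility already fails at the threshold associated with $x_1 - 1$: when $x_1 > 2^k$ the threshold $2^{k+1}$ is unchanged, and when $x_1 = 2^k$ it drops to $2^k$, but a heap with a $1$ bit strictly below position $k$ fails divisibility by $2^k$ as well. So the inductive hypothesis applies one step down, giving $\nimber(x_1-1,x_2,\dots,x_n)\neq x_1-1$; the lower bound of Corollary \ref{thm:easyBounds} then forces $\nimber(x_1-1,x_2,\dots,x_n)\geq x_1$, and strict monotonicity in the first pile (Lemma \ref{thm:NimberIncreasesWithA} and its corollary, which your proposal never invokes) yields $\nimber(x_1,x_2,\dots,x_n)> x_1$. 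This disposes of exactly the sub-case you could not close: an explicit witness of nimber $x_1$ must exist a posteriori (a mex exceeding $x_1$ implies some successor attains $x_1$), but it only ever needs to be \emph{exhibited} in the one residual case where the descent in $x_1$ stalls, namely $x_1 = 2^m$ with some $x_i$ an odd multiple of $2^m$ --- which is your easy bit-$k$ case and the paper's Case 2.
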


\textbf{Question 2: } Under which circumstances is the upper bound from Corollary \ref{thm:easyBounds} realized? \vspace{1mm}
\par The answer turns out to be that the upper bound is realized when $x_1$ is sufficiently large compared to the other $x_i$s. We first define $A(x_2,...,x_n)$ to be the least value of $x_1$ such that $\forall a\geq x_1$, $\nimber{(*a\ws{(*x_2\ns...\ns *x_n})}=a+x_2+...+x_n$.
\begin{theorem}\label{thm:bigA}
Let $(x_1,x_2 \cdots, x_n)$ be an Auxiliary-Nim game with $n$-many piles. Then, $A(x_2, \cdots, x_n)$ is well-defined. Furthermore, $A(x_2, \cdots, x_n)$ grows quadratically with respect to the sum $x_2+ \cdots + x_n$.
\end{theorem}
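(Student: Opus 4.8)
The plan is to prove the two assertions in turn. Since the upper bound $\nimber(a,x_2,\ldots,x_n)\le a+T$ (with $T:=x_2+\cdots+x_n$) is already supplied by Corollary~\ref{thm:easyBounds}, establishing $\nimber(a,x_2,\ldots,x_n)=a+T$ reduces to the lower bound, and by Definition~1 this amounts to showing that the options of the position realize every value in $\{0,1,\ldots,a+T-1\}$; no option can realize $a+T$ itself, which is immediate from the upper bound applied to each child (every move strictly decreases either $a$ or $T$, so each child has nimber at most $a+T-1$). I would run a double induction: an outer strong induction on $T$ (base case $T=0$, where the game is just $*a$ and the claim is trivial with threshold $0$), and, for a fixed Nim part $(x_2,\ldots,x_n)$, an inner induction on $a$.

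For the coverage, classify the moves as moving only in the first heap, only in the Nim part, or in both. Moving only in the first heap sends $a\mapsto a'$ and, by the inner induction, contributes the nimbers $a'+T$ for $a'\in[A,a)$, covering the top interval $[A+T,\,a+T-1]$. Moving in both heaps sends $a\mapsto a'$ and applies one Nim move, producing a reduced tuple whose sum $T'$ can be made equal to $T-d$ for any $d\in\{1,\ldots,M\}$, $M:=\max_i x_i$ (reduce the largest heap by $d$). By the outer induction each reduced tuple has a well-defined, smaller threshold; writing $A^*$ for the largest such threshold, these moves contribute $a'+T'$ for all $a'\ge A^*$ and all $T'\in[T-M,T-1]$, which tile a full interval reaching down to about $A^*+T-M$. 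A particularly clean ingredient I would isolate first is that whenever the Nim part has exactly one nonzero heap the two bounds of Corollary~\ref{thm:easyBounds} coincide (there $S=T$, where $S:=x_2\oplus\cdots\oplus x_n$), so such games satisfy $\nimber=a+T$ for every $a$, i.e.\ threshold $0$; emptying all but one Nim heap therefore yields complete staircases $\{a'+x_i:a'<a\}$ and lets the tiling start from the bottom of those heaps.

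The main obstacle is the bottom of the range, the small nimber values, where the component subgames sit in the chaotic regime and the clean formula fails. Here the Nim-type options (collapse $a$ to $0$ and make one Nim move) only realize the values reachable inside the pure Nim game $(x_2,\ldots,x_n)$ together with $S$, i.e.\ the interval $[0,S]$; this is fatally short when $S$ is small while $T$ is large, the extreme case being $x_2=x_3$, where $S=0$ yet one must still cover $[0,\min(x_2,x_3)-1]$. Filling this bottom gap is exactly the delicate recursive part: I would cover it by combining moves at several small values of $a'$, controlling $\nimber(a',\cdot)$ through the inductive formula on the reduced tuples and the bounds of Corollary~\ref{thm:easyBounds}, and I expect this step, rather than the tiling of the top, to carry the real difficulty and to dictate how large $a$ must be.

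Finally, to extract the growth rate I would track the recursion the argument produces: to guarantee the bottom is covered for a sum-$T$ tuple one needs $a$ to exceed roughly $A^*+O(T)$, where $A^*$ is the maximal threshold among the sum-$(T-1)$ reduced tuples. Unwinding this over the $T$ levels of the outer induction gives $A(x_2,\ldots,x_n)=O(T^2)$, yielding well-definedness together with the quadratic upper bound. For the matching lower bound $A=\Omega(T^2)$ needed for ``grows quadratically,'' I would exhibit an explicit family (for instance equal heaps, or heaps that are powers of two, which maximize the chaotic behavior) and show by direct computation that $\nimber(a,\ldots)\ne a+T$ persists until $a$ is quadratic in $T$, so that the threshold cannot be smaller than $\Omega(T^2)$.
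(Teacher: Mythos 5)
The step you defer as ``the real difficulty''---covering the bottom of the range of nimber values---is precisely where your proposal fails to close, and it is the one idea the paper's proof turns on: that coverage is \emph{free}. By Corollary \ref{thm:easyBounds}, $\nimber(a,x_2,\dots,x_n)\geq a+(x_2\ns x_3\ns\cdots\ns x_n)\geq a$, and since $\nimber$ is defined as a mex, this inequality literally \emph{is} the statement that every value in $[0,a-1]$ is already realized by some option of $(a,x_2,\dots,x_n)$. No explicit construction is needed, and your worry about the extreme case $S=0$ (e.g.\ $x_2=x_3$) evaporates. Hence only the top window $[a,a+T-1]$, with $T=x_2+\cdots+x_n$, requires explicit single-move coverage, and the paper gets it in one stroke: set $a^*=A(x_2-1,x_3,\dots,x_n)+T$ and consider the moves to $(a^*-d,\,x_2-1,\,x_3,\dots,x_n)$ for $0\leq d\leq T$; by the induction on the sum (together with the $n$-pile analogue of Lemma \ref{thm:NimberIsAlwaysSum}) these options have nimbers exactly $a^*-d+T-1$, sweeping $[a^*-1,\,a^*+T-1]$. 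Since no option can have nimber $a^*+T$ (your observation, which is correct), the mex is $a^*+T$, so $A(x_2,\dots,x_n)\leq a^*$, and unrolling $a^*\leq A(\text{reduced tuple})+T$ over the $T$ levels of the induction gives $A\leq T(T+1)/2=O(T^2)$. Your top-interval tiling matches this in spirit, but without the mex reading of the lower bound your ``combining moves at several small values of $a'$'' remains an unproven placeholder, so neither well-definedness nor the quadratic bound is actually established. (A further flaw: your staircase ingredient ``empty all but one Nim heap'' is not a legal single move once there are more than two Nim piles, since the right component is a disjunctive sum and each move touches at most one of $x_2,\dots,x_n$.)

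Separately, your plan to prove a matching lower bound $A=\Omega(T^2)$ is both absent from the paper and unattainable for the families you name: Lemma \ref{lemma:a0UpperBound} gives $A(b,c)\leq \min(\sim b,\sim c)+1$, which is \emph{linear} in $b+c$, so for the three-pile game (in particular for equal heaps or heaps near powers of two) the threshold can never be quadratic in the sum. The paper's intended reading, stated explicitly just before its proof, is only that $A$ is \emph{bounded above} quadratically; your ``direct computation'' step should be dropped rather than attempted.
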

Further, in the special case of $n=3$, we prove a linear upper bound. In Lemma \ref{lemma:a0UpperBound}, we show that $$A(b,c) \leq min(\sim b,\sim c)+1$$ where $\sim x$ denotes the bitwise complement. We also provide some sufficient conditions for this upper bound to be realized. The Analysis of the $n=3$ case brings us to the next question.
\vspace{2mm}

\textbf{Question 3: } Can we come up with a closed-form, non-recursive way to describe the behaviour of $\nimber{(*a\ws(*b\ns *c)}$, the Auxiliary Nim game with only $3$ piles?
\vspace{1mm}
\par Question 3 is still open. We to show a linear upper bound on $A(b,c)$, and partially resolved the cases where $b$ and $c$ are sufficiently close to a power of $2$. In particular, we show the following:

\begin{theorem}\label{thm:same-char}
Suppose $b=2^i+k$ and $c=2^i+l$ with $k<l<2^i$.  Then
\[ \nimber{(a,b,c)}= \begin{cases}
      a+b+c & a \geq 2^i-l\\
      2a+c+k+l  & 2^i-k-l\leq a<2^i-l\,;\, l\leq2^{i-1}\\
      \geq\nimber{(a,k,l)} & l>2^{i-1}\,;\,\nimber{(a,k,l)}\geq 2^i\\
      \nimber{(a,k,l)} &  \nimber{{(a,k,l)}<2^i}
   \end{cases}
\]
\end{theorem}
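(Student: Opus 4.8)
The plan is to establish all four cases at once by strong induction on the total heap count $a+b+c$, exploiting the symmetry of $\nimber(a,b,c)$ under interchanging its last two arguments together with the estimates $a+(b\oplus c)\le\nimber(a,b,c)\le a+b+c$ of Corollary \ref{thm:easyBounds}. Two elementary observations drive everything. First, since $k,l<2^i$ both $b$ and $c$ carry the bit $2^i$, so $b\oplus c=k\oplus l$; second, any position reached by dropping exactly one of the last two heaps below $2^i$ has the bit $2^i$ set in the Nim-sum of its last two heaps, so by the lower bound of Corollary \ref{thm:easyBounds} its nimber is at least $2^i$. The first case is then immediate: because $k<l$ we have $\min(\sim b,\sim c)=\sim c=2^i-1-l$, so the hypothesis $a\ge 2^i-l$ reads $a\ge\min(\sim b,\sim c)+1\ge A(b,c)$ by Lemma \ref{lemma:a0UpperBound}, and the upper bound of Corollary \ref{thm:easyBounds} is attained.

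For the two cases with $l>2^{i-1}$ I would strip the common bit $2^i$ and set up a bijection between the moves of $(a,2^i+k,2^i+l)$ that leave both of the last two heaps at least $2^i$ and the moves of the reduced game $(a,k,l)$; every remaining move is a bit-dropping move and so, by the observation above, reaches a position of nimber $\ge 2^i$. Since $\nimber(a,k,l)\ge a$, the hypothesis $\nimber(a,k,l)<2^i$ already forces $a<2^i$. The fourth case then follows by checking that the lifted moves reproduce exactly the values below $\nimber(a,k,l)$ while no move attains $\nimber(a,k,l)$ itself: the values below $2^i$ are carried up unchanged by the inductive hypothesis, and every bit-dropping move contributes a value $\ge 2^i>\nimber(a,k,l)$. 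For the third case only the lower bound is claimed; the values below $2^i$ are again obtained by lifting, and the intermediate values in $[2^i,\nimber(a,k,l))$ must be manufactured by bit-dropping moves, each target being realized by a suitable reduction of $b$ or $c$ below $2^i$ whose nimber is pinned into place by Corollary \ref{thm:easyBounds}. A point demanding care here is that a lifted position need not remain in the same case of the theorem, so one must track into which case each reduced game falls.

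The genuine obstacle is the second case, where $l\le 2^{i-1}$, the parameter $a$ lies in the window $2^i-k-l\le a<2^i-l$, and the target value is $N=2a+c+k+l=2a+2^i+k+2l$. The coefficient $2$ on $a$ says the nimber should increase by exactly two as $a$ increases by one inside the window, so the move $a\to a-1$ already delivers the value $N-2$ by induction, and the crux is to show that the remaining value $N-1$ is produced by some reduction of $b$ or $c$ (possibly combined with a reduction of $a$) while no move whatsoever reaches $N$. I expect the hypotheses $a\ge 2^i-k-l$ and $l\le 2^{i-1}$ to be exactly what controls the carries in these Nim-reductions, so that the $b$- and $c$-moves fill in precisely the gap values beneath $N$ without ever producing $N$; verifying this bit-by-bit, and bounding the nimbers of the intervening positions that have a heap below $2^i$ through Corollary \ref{thm:easyBounds}, is where the bulk of the case analysis will lie. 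Once the mex at $(a,b,c)$ is shown to equal $N$, the induction closes.
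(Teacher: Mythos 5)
Your dispositions of cases 1 and 4 are essentially sound: case 1 needs only the upper bound $A(b,c)\le \min(\sim b,\sim c)+1=2^i-l$ from \Cref{lemma:a0UpperBound} (the paper routes this through \Cref{lemma:a0bothPowerOfTwo}, which contains the same bound), and your lifting bijection for case 4 is in effect a re-derivation of \Cref{lemma:addingPowerOfTwo}, which the paper simply cites from Boros et al. But already in case 3 your mechanism has a hole: you claim every value in $[2^i,\nimber(a,k,l))$ is realized by a bit-dropping move ``whose nimber is pinned into place by \Cref{thm:CrudeBounds}.'' The crude bounds pin a nimber only when they coincide, i.e.\ when the reduced heap is bit-disjoint from the surviving one (so that $b'\ns c=b'+c$); such positions realize only values $a'+b'+c$ with $b'\wedge c=0$, all of which are at least $2^i+k$ (reducing $c$) or $2^i+l$ (reducing $b$), so the sub-interval $[2^i,2^i+k)$ is untouched, and even above that the sums $a'+b'$ need not sweep out every integer since $b'$ is constrained to a bit mask. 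Handling exactly these residual values is what the paper's gap machinery (\Cref{lemma:nongapsareattainable}, \Cref{lemma:gapsarelowerbounds}) exists for; your sketch has no substitute.

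The fatal gap is case 2, where you explicitly defer the content (``I expect the hypotheses\dots to be exactly what controls the carries''). Two concrete failures. First, your anchor is wrong: the step ``$a\to a-1$ delivers $N-2$ by induction'' fails at the bottom of the window $a=2^i-k-l$, since $a-1$ exits case 2; there $\nimber(a-1,k,l)\le (a-1)+k+l<2^i$, so by \Cref{lemma:addingPowerOfTwo} $\nimber(a-1,b,c)=\nimber(a-1,k,l)<2^i$, nowhere near $N-2=3\cdot 2^i-k-2$. The entire ``count up by two'' pattern must therefore be established from scratch at $a=2^i-k-l$, and that is where the paper's proof actually lives: an induction on $k+l$ (not on $a+b+c$) anchored at $k=0$ by \Cref{thm:conjectureone}, with the ranges $[2^{i+1},3\cdot 2^i-k-l-1]$ and $[2^{i+1}+l,3\cdot 2^i-k-1]$ covered by explicit moves to $(a',2^i-l-1,2^i+l)$ (pinned because $(2^i-l-1)\ns c=(2^i-l-1)+c$) and to $(a',2^i-1,2^i+l)$ (pinned not by \Cref{thm:CrudeBounds} but by $A(2^i-1,2^i+l)\le 1$ via \Cref{lemma:a0UpperBound}), and with unreachability of $N=3\cdot 2^i-k$ shown via \Cref{lemma:addingPowerOfTwo} for reductions in $a$ alone and the $k+l$ inductive hypothesis for reductions in $b$ or $c$. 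Second, your proposed strong induction on $a+b+c$ cannot supply any of this, because moves leave the theorem's hypothesis class: once a heap drops below $2^i$ (or the leading bits of the two heaps differ), the resulting position satisfies none of the four cases and your inductive hypothesis is silent about it — which is precisely why the paper imports external inputs (\Cref{thm:conjectureone}, \Cref{lemma:a0UpperBound}, the gap lemmas) to evaluate such positions. As written, your proposal identifies the correct target value $N$ but proves neither the reachability of $N-1$ nor the unreachability of $N$ in the one case that constitutes the theorem's actual content.
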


This recursive structure causes the $SG$ function to become rather complicated, even in simple circumstances.  For a qualitative view of this complexity, see Figure \ref{fig:fig}.

\vspace{1 mm}
We also get closer to a complete characterization of $\nimber(*1\ws (*b\ns*c))$:

\begin{theorem}\label{thm:oddb}
For $b$ odd, if $c\geq 2^{2\lfloor \log_2 b \rfloor+1}-2^{\lfloor \log_2 b \rfloor+2}-1$ then $\nimber{(*1\ws (*b\ns*c))}=1+b+c$.
\end{theorem}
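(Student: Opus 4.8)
The plan is to prove the matching lower bound $\nimber(1,b,c)\ge 1+b+c$, since the upper bound $\nimber(1,b,c)\le 1+b+c$ is already supplied by Corollary \ref{thm:easyBounds}. In fact the upper bound gives something stronger for free: every legal move from $(1,b,c)$ strictly decreases the total number of stones, so every successor position $(a',b',c')$ satisfies $a'+b'+c'\le b+c$ and hence, by Corollary \ref{thm:easyBounds}, has nimber at most $b+c<1+b+c$. Thus $1+b+c$ never appears among the successors automatically, and it suffices to show that \emph{every} value in $[0,b+c]$ does appear as the nimber of some successor; the mex is then forced to equal $1+b+c$.

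First I would enumerate the successors of $(1,b,c)$ according to the selective rule. A move either (i) removes the single token in the $*1$ heap, possibly together with one Nim move, landing in a plain two-pile Nim position and contributing the ``cash-out'' nimbers $\{b\oplus c':0\le c'\le c\}\cup\{b'\oplus c:0\le b'<b\}$; or (ii) keeps the token and makes one Nim move, contributing the recursive nimbers $\{\nimber(1,b',c):b'<b\}\cup\{\nimber(1,b,c'):c'<c\}$. I would then run strong induction on $c$, with $b$ and $i:=\lfloor\log_2 b\rfloor$ fixed and $C:=2^{2i+1}-2^{i+2}-1$. When $c>C$ the inductive hypothesis gives $\nimber(1,b,c')=1+b+c'$ for every $C\le c'<c$, so the token-keeping successors of type (ii) realize exactly the top interval $[1+b+C,\,b+c]$. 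The entire problem therefore collapses to covering the bottom interval $[0,\,b+C]$, whose width does \emph{not} depend on $c$; note that $b+C<2^{2i+1}$.

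To cover $[0,b+C]$ I would lean on the cash-out values $\{b\oplus c':0\le c'\le c\}$. A target $v$ is hit this way precisely when its partner $b\oplus v$ satisfies $b\oplus v\le c$; since $b<2^{i+1}$, xoring by $b$ only permutes each block of $2^{i+1}$ consecutive integers, and because $b$ is \emph{odd} these cash-out values realize both parities and in particular comfortably cover the low end of $[0,b+C]$. For $c$ comfortably larger than $C$ every partner $b\oplus v$ with $v\le b+C$ already satisfies $b\oplus v\le c$, so the cash-out values alone cover $[0,b+C]$ and the induction closes immediately. The genuine difficulty is confined to $c$ near the threshold $C$, where the partners $b\oplus v$ for the largest targets $v$ exceed $c$ and cannot be produced by cash-out; these residual values must instead be extracted from the recursive successors $\nimber(1,b,c')$ and $\nimber(1,b',c)$.

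I expect this near-threshold residual coverage to be the main obstacle, and it is exactly what the precise value of $C$ and the hypothesis that $b$ is odd are calibrated to handle. To pin down the needed recursive values I would analyze the subgames $\nimber(1,b,c')$ in the ``chaotic'' range $c'<C$ by passing to the same-characteristic configurations that arise deeper in the recursion, where Theorem \ref{thm:same-char} applies, and combine this with an inner induction on $b$ through the smaller odd heaps $b'<b$. Verifying that these contributions fill in precisely the residual values with no gaps, and checking the base case $c=C$, where no inductive help is available for the top interval, is where I anticipate essentially all of the work to lie.
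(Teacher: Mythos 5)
Your top-level bookkeeping is correct: $1+b+c$ can never be a successor nimber since every move strictly decreases the stone count, the inductive hypothesis on $c'\in[C,c)$ fills the top interval $[1+b+C,\,b+c]$, and for $c\geq 2^{2i+1}$ the cash-out values $b\oplus c'$ alone cover $[0,b+C]$. But this reduction merely relocates the entire content of the theorem into the part you defer: establishing the base case $c=C$ and covering the residual values for near-threshold $c$ requires a quantitative analysis of the whole chaotic region $c'<C$, and your proposal contains no mechanism for it. The paper's proof supplies exactly this mechanism: writing $i=\lfloor\log_2 b\rfloor$, it first uses Lemma \ref{lemma:trailingones} (which needs the trailing-ones structure of odd $b$) to get $\nimber{(1,b,c)}=1+b+c$ whenever $c$ has no $1$ in its $i$-th bit, then runs an accumulation argument over the blocks $c\in[n2^i,(n+1)2^i-1]$ with that bit set: within each block the nimbers ``count up'' by $1$ in $k$, skipping the large values produced in previous blocks, so each block gains at least one additional value exceeding $b+n2^i$; after at most $2^{i+1}(2^i-2)$ steps every position in a block exceeds the bound, which forces the maximum for all larger $c$. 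This amortized count is what the threshold $C=2^{2i+1}-2^{i+2}-1$ is calibrated to, and nothing in your plan reproduces it --- your gesture at ``passing to same-characteristic configurations where Theorem \ref{thm:same-char} applies plus an inner induction on $b$'' is not an argument, and note that the residual set near $c=C$ has size up to roughly $2b$, while Theorem \ref{thm:same-char} only governs $c'$ in the single block $[2^i,2^{i+1})$, so most residuals must come from chaotic-region values of $\nimber{(1,b,c')}$ that you have no handle on.

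A second, related problem is that you misattribute the role of the hypothesis that $b$ is odd. Your parity remark about cash-out values is both weaker than needed and beside the point: the genuine role of oddness is structural, namely that odd $b$ decomposes as $2^j-1+(2n+1)2^i$ with trailing ones, which is precisely what Lemma \ref{lemma:trailingones} and the paper's final reduction (inducting over the sub-components of $b$ by leading ones) exploit. Indeed Lemma \ref{lemma:evens} shows that for $b$ and $c$ both even with $b\oplus c\neq b+c$ one has $A(b,c)\neq 1$, so the theorem genuinely fails for even $b$; since your cash-out set $\{b\oplus c': c'\leq c\}\cup\{b'\oplus c : b'<b\}$ exists equally well for even $b$, no argument resting on parity of cash-out values can be the correct explanation, and the failure mode for even $b$ would silently break your residual-coverage step.
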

Therefore, there are at least some cases where the $SG$-function of this game is well-behaved. But outside the domain of the assumptions of the previous theorems, even in the analysis of the simplest possible Auxiliary Game, the function $\nimber{(*1\ws (*b\ns*c))}$ seems to result in combinatorial chaos.

\begin{figure}
    \centering
    \includegraphics{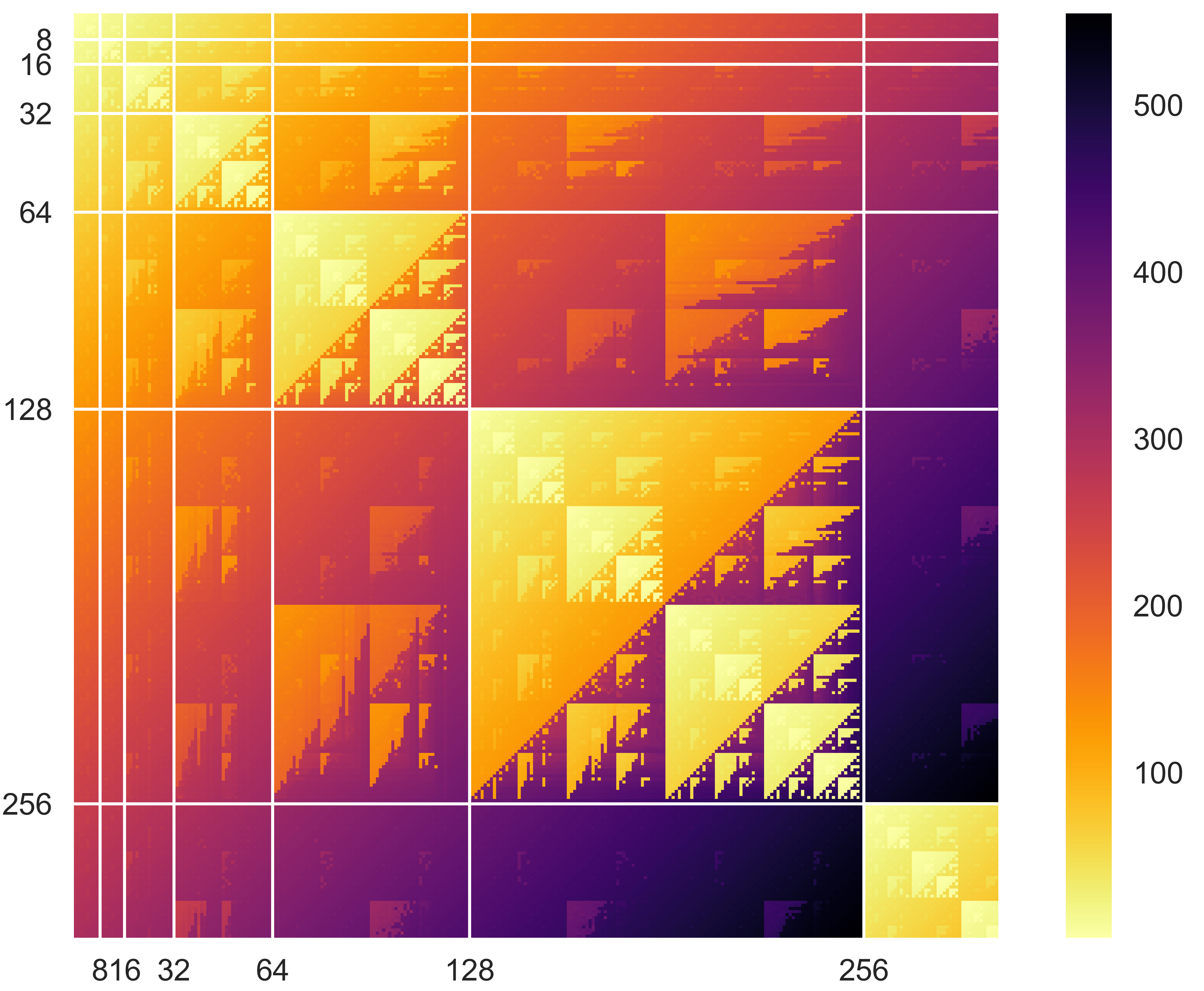}
    \caption{A heat-map for the Sprague-Grundy values (nimbers) for the game $(*1)\ws (*x\ns *y)$.  The behavior of the blocks of size $2^n$ along the diagonal are characterized by Theorem \ref{thm:same-char}. The structure of the fixed blocks ``decay'' as they are translated to the right/down.  This is partially explained by Theorem \ref{thm:oddb}.}
    \label{fig:fig}
\end{figure}
\begin{figure}
    \centering
    \includegraphics{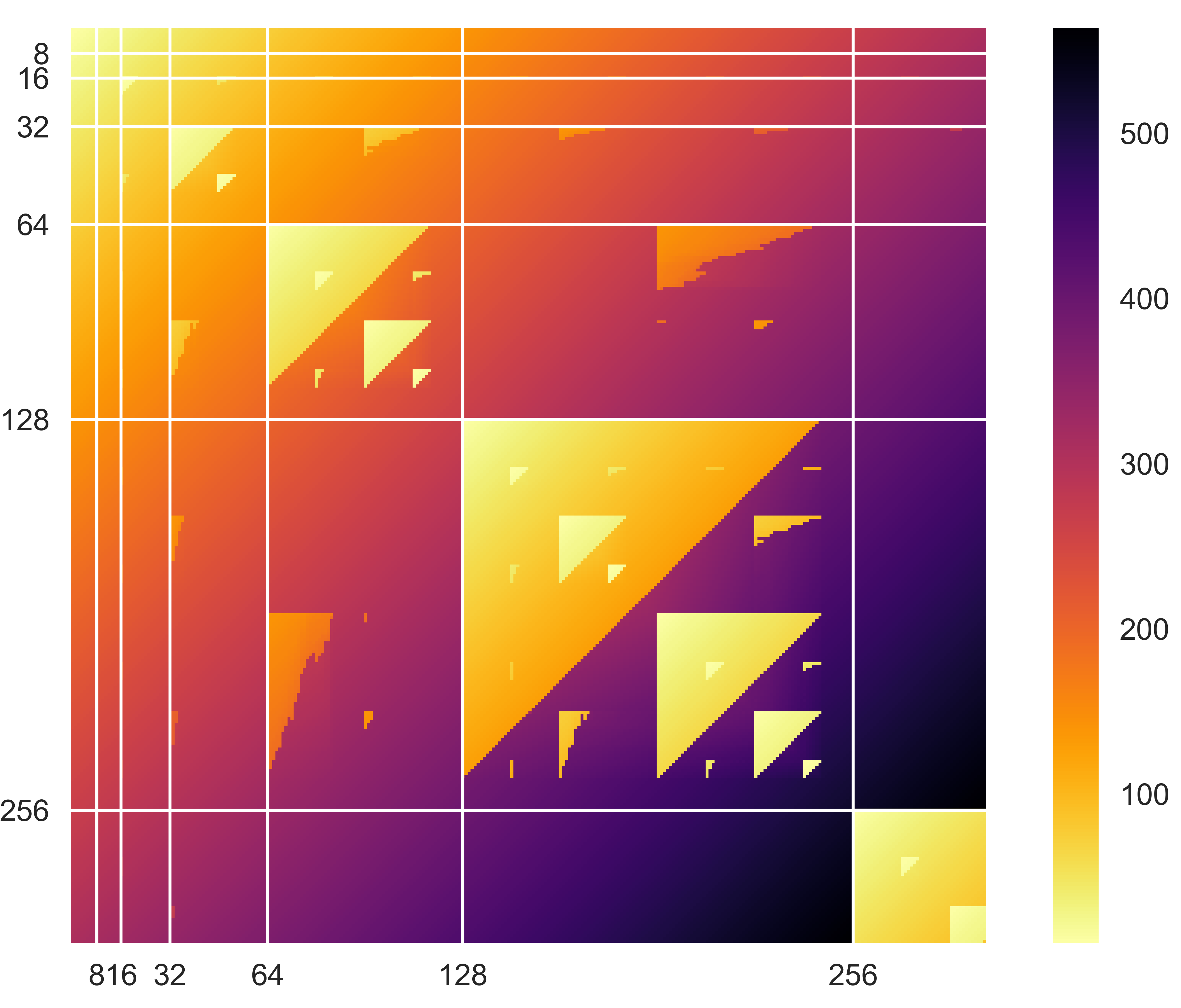}
    \caption{A heat-map for the Sprague-Grundy values (nimbers) for the game $(*8)\ws (*x\ns *y)$. Notice that the auxiliary pile size is larger compared to the game in Figure \ref{fig:fig}, and the heat-map looks more ``orderly''. This is partially explained by Theorem \ref{thm:bigA}, in particular, by the fact that $A(b,c)\leq min(\sim b,\sim c)+1$ (Lemma \ref{lemma:a0UpperBound}).
    Nimbers achieve the lower bound (in this case, $8$) only along the diagonal when $b=c$ is a multiple of $16$, as shown by Theorem \ref{thm:firstTheorem}.}
    \label{fig:fig2}
\end{figure}

\section{Results}
From now on, we will refer to the game $(*a)\ws(*b\ns *c)$ simply as $(a,b,c)$, and similarly $(*x_1)\ws(*x_2\ns *x_3 \ns \cdots \ns *x_n)$ as $(x_1,x_2,x_3, \cdots, x_n)$. Also, $\nimber(a,b,c)$ denotes the Sprague-Grundy value of the game $(a,b,c)$. Finally, we use $(a,b,c)\rightarrow N$ to state that the game $(a,b,c)$ can reach a game with nimber $N$ through some legal move. Similarly, $(a,b,c)\nrightarrow N$ means that the game $(a,b,c)$ cannot reach a game with nimber $N$. Observe that $(a,b,c)\rightarrow N$ implies $\nimber(a,b,c)\neq N$.

We begin with some preliminary results:

\begin{lemma} \label{thm:NimberIncreasesWithA}
$\nimber(a,b,c)>\nimber(a-1,b,c)$. $\forall a\in\mathds{N}$
\end{lemma}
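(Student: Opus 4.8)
The plan is to prove the strict monotonicity $\nimber(a,b,c) > \nimber(a-1,b,c)$ by exhibiting, from the position $(a,b,c)$, a legal move to a position with Sprague-Grundy value exactly $\nimber(a-1,b,c)$, together with the fact that the game $(a-1,b,c)$ is itself reachable from $(a,b,c)$. Recall that in the selective compound $(*a)\ws(*b\ns *c)$ a legal move lets a player decrease $a$, decrease the xor-component $(*b\ns *c)$, or do both simultaneously. The key structural observation is therefore that every option available from $(a-1,b,c)$ is also available from $(a,b,c)$: any move that lowers the auxiliary pile from $a-1$ to some $a' < a-1$ (possibly combined with a move in $*b\ns *c$) can equally be made from $(a,b,c)$ by lowering $a$ to that same $a'$, and any move that touches only the $*b\ns *c$ side is identical in both games. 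In addition, from $(a,b,c)$ one may move directly to $(a-1,b,c)$ by decrementing the auxiliary pile by one.

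Granting this, I would argue as follows. Let $N = \nimber(a-1,b,c)$. First, since $(a,b,c) \to (a-1,b,c)$, the value $N$ is an option-value of $(a,b,c)$, and by the observation preceding Lemma~1 this already forces $\nimber(a,b,c) \neq N$. It remains to upgrade ``$\neq$'' to ``$>$''. For this I would show that every value strictly below $N$ that appears among the options of $(a-1,b,c)$ also appears among the options of $(a,b,c)$. Because $N = \mathrm{mex}$ of the option-values of $(a-1,b,c)$, \emph{every} value in $\{0,1,\dots,N-1\}$ is realized as the nimber of some option of $(a-1,b,c)$; by the containment of option sets established above, each such value is likewise realized as the nimber of an option of $(a,b,c)$. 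Hence $\{0,1,\dots,N-1\}$ lies in the set of option-values of $(a,b,c)$, and since $N$ itself is also an option-value (from the direct move to $(a-1,b,c)$), the mex of the options of $(a,b,c)$ must be at least $N+1$. Therefore $\nimber(a,b,c) \geq N+1 > N = \nimber(a-1,b,c)$, which is exactly the claim.

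The main obstacle, and the step deserving the most care, is justifying the containment of option sets rigorously across all three move-types of the selective compound, particularly the combined moves in which a player simultaneously alters both the auxiliary pile and the $*b\ns *c$ component. I must verify that decreasing $a-1$ to $a'$ while making a fixed move in $*b\ns *c$ has an exact counterpart decreasing $a$ to the same $a'$ with the identical move in $*b\ns *c$, so that the two options are literally the same game position $(a', b', c')$ and hence carry the same nimber. Moves that decrease the auxiliary pile to $a-1$ exactly (reached only from $a$, not from $a-1$) are the ``extra'' options enjoyed by $(a,b,c)$ and are precisely what gives the strict inequality rather than mere equality. A clean way to organize this is to induct on $a$: the base case is immediate, and the inductive step uses the option-set containment together with the direct move $(a,b,c)\to(a-1,b,c)$ to drive the mex strictly upward. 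I would be careful to confirm that no option of $(a-1,b,c)$ fails to lift to $(a,b,c)$ — equivalently, that lowering the auxiliary pile is always a permissible selective move regardless of the current value of $b$ and $c$ — since the whole argument hinges on this one-directional inclusion of move-sets.
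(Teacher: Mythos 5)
Your proposal is correct and matches the paper's own argument: the paper likewise observes that every option of $(a-1,b,c)$ is replicable from $(a,b,c)$ (decreasing $a$ along with, or instead of, the component move) and that $(a,b,c)\rightarrow\nimber(a-1,b,c)$ directly, forcing the mex strictly upward. You merely spell out the mex bookkeeping explicitly (and mention an unnecessary induction on $a$), but the substance is identical.
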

\begin{proof}
We see that if $(a-1,b,c)\rightarrow N$, $(a,b,c)\rightarrow N$, by first setting $a$ to $a-1$, and replicating the remaining move. Moreover, $(a,b,c)\rightarrow \nimber(a-1,b,c)$, thus $\nimber(a,b,c)>\nimber(a-1,b,c)$ as desired.
\end{proof}
\begin{corollary}
$\nimber(x_1,x_2,x_3, \cdots, x_n)>\nimber(x_1-1,x_2,x_3, \cdots, x_n)$. $\forall x_1\in\mathds{N}$
\end{corollary}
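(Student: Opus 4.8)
The plan is to reprise the move-replication argument used for Lemma~\ref{thm:NimberIncreasesWithA}, which is exactly the three-pile instance of this statement; the additional heaps $x_4,\dots,x_n$ simply ride along and never interfere. Writing $G=(x_1,x_2,\dots,x_n)$ and $G'=(x_1-1,x_2,\dots,x_n)$, and viewing the game as the selective compound of the auxiliary pile $*x_1$ with the Nim-sum $H=*x_2\ns\cdots\ns*x_n$, I would first show that every position reachable from $G'$ is also reachable from $G$.

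First I would split into cases according to which components the selective rule touches in the move taken from $G'$. If the move plays only in $H$, reaching $(x_1-1,x_2',\dots,x_n')$, then from $G$ I select both components, decrementing the auxiliary pile $x_1\to x_1-1$ while copying the identical $H$-move, arriving at the same position. If the move plays only in the auxiliary pile, reaching $(a'',x_2,\dots,x_n)$ with $a''<x_1-1<x_1$, then from $G$ I select only the auxiliary pile and move $x_1\to a''$, which is legal and lands on the same position. If the move plays in both, I combine the two reductions in the obvious way, decrementing the auxiliary pile to $a''$ and copying the $H$-move. In every case the resulting position, and hence its nimber, is identical, so the set of nimbers reachable from $G'$ is contained in the set reachable from $G$.

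Next I would observe that $G\rightarrow\nimber(G')$ directly: selecting only the auxiliary pile and moving $x_1\to x_1-1$ reaches $G'$ itself. Combining the two observations, by the definition of $mex$ all of $0,1,\dots,\nimber(G')-1$ are reachable from $G'$, hence from $G$, while $\nimber(G')$ is reachable from $G$ as well; therefore every value in the range from $0$ through $\nimber(G')$ is excluded, forcing $\nimber(x_1,\dots,x_n)>\nimber(x_1-1,x_2,\dots,x_n)$, as desired.

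I expect essentially no genuine obstacle here, since the argument is structurally identical to the base lemma. The one point requiring mild care is the semantics of ``playing in $H$'' when $H$ is itself a disjunctive compound: a single move in $H$ alters exactly one of the heaps $x_2,\dots,x_n$, and the whole argument only needs that any such move is replicable from $G$ after optionally bundling in the decrement of the auxiliary pile. Since the number of heaps plays no role in this replication, the generalization from three piles to $n$ piles is immediate.
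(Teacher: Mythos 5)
Your proposal is correct and is essentially the paper's own argument: the paper proves the three-pile case (Lemma~\ref{thm:NimberIncreasesWithA}) by exactly this move-replication scheme and states the corollary without further proof, the generalization being immediate for the reason you give --- the replication of an $H$-move, optionally bundled with the decrement $x_1\to x_1-1$, is insensitive to the number of heaps in $H$. Your explicit three-way case split on which components the move touches, together with the observation $G\rightarrow G'$ and the $mex$ argument, fills in the details the paper leaves implicit (modulo one phrasing slip: the values $0,\dots,\nimber(G')$ are \emph{realized} among the options of $G$ and hence excluded as candidates for the $mex$).
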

\begin{lemma} \label{thm:CrudeBounds}
$a+(b\ns c)\leq\nimber(a,b,c)\leq a+b+c$
\end{lemma}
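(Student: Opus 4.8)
The plan is to establish the two inequalities separately, each by a short induction that exploits structure already in hand. For the upper bound I would induct on the total number of stones $a+b+c$; for the lower bound I would induct on the size $a$ of the auxiliary pile and simply quote the monotonicity statement of Lemma \ref{thm:NimberIncreasesWithA}.

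For the upper bound $\nimber(a,b,c)\le a+b+c$, the key observation is that every legal move in $(a,b,c)$ --- reducing the auxiliary pile, making a Nim move in the $*b\oplus *c$ component, or doing both at once --- only removes stones, so it sends $(a,b,c)$ to some $(a',b',c')$ with $a'\le a$, $b'\le b$, $c'\le c$ and $a'+b'+c'<a+b+c$ strictly. By strong induction on $a+b+c$, with base case $(0,0,0)$ where $\nimber=0$, every reachable position satisfies $\nimber(a',b',c')\le a'+b'+c'\le a+b+c-1$. Hence the set of reachable nimbers is contained in $\{0,1,\dots,a+b+c-1\}$, and the mex of such a set is at most $a+b+c$. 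This gives $\nimber(a,b,c)\le a+b+c$.

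For the lower bound $\nimber(a,b,c)\ge a+(b\oplus c)$, I would induct on $a$. The base case $a=0$ is the crucial identification: when the auxiliary pile is empty it offers no legal move, so selecting it (or selecting ``both'') is impossible and the position $(0,b,c)$ presents exactly the moves of ordinary two-pile Nim $*b\oplus *c$. Therefore $\nimber(0,b,c)=b\oplus c$, which matches $0+(b\oplus c)$. For the inductive step, Lemma \ref{thm:NimberIncreasesWithA} gives $\nimber(a,b,c)>\nimber(a-1,b,c)$; since nimbers are integers this means $\nimber(a,b,c)\ge\nimber(a-1,b,c)+1$, and combining with the inductive hypothesis $\nimber(a-1,b,c)\ge(a-1)+(b\oplus c)$ yields $\nimber(a,b,c)\ge a+(b\oplus c)$.

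I expect no serious obstacle here: the heavy lifting for the lower bound has already been done in Lemma \ref{thm:NimberIncreasesWithA}, so the only points needing care are the base-case observation that an empty auxiliary pile reduces the game to plain Nim, and, for the upper bound, the verification that no move ever increases a coordinate (so the total strictly decreases and the induction is well-founded). Both are routine, which is presumably why these bounds are labelled ``crude''; the genuine work in the paper lies in deciding when each bound is tight.
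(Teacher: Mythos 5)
Your proposal is correct and matches the paper's argument: the lower bound is proved exactly as in the paper, by induction on $a$ using the strict monotonicity of Lemma \ref{thm:NimberIncreasesWithA} together with the base case $\nimber(0,b,c)=b\ns c$. Your upper-bound induction on $a+b+c$ is simply a spelled-out version of the paper's one-line observation that the nimber is bounded by the depth $a+b+c$ of the game, so the two proofs are essentially the same.
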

\begin{proof}
The upper bound is trivial, since $a+b+c$ is the depth of the game $(a,b,c)$. We prove the lower-bound by induction on $a$. Let $b,c$ be arbitrary and fixed. For the base case, clearly $\nimber(0,b,c)=b\ns c \geq 0 + (b\ns c)$. Assuming that the bounds holds for lower values of a, we get $\nimber(a-1,b,c)\geq a-1 + (b\ns c)$ by hypothesis. By Lemma 1, we have that $\nimber(a,b,c)\geq a + (b\ns c)$ as desired.
\end{proof}
\begin{corollary}\label{thm:easyBounds}
$x_1+(x_2\ns x_3 \ns \cdots \ns x_n)\leq\nimber(x_1,x_2,x_3, \cdots, x_n)\leq x_1+x_2+x_3+\cdots+x_n$
\end{corollary}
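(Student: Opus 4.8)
The plan is to prove this $n$-pile statement exactly as Lemma \ref{thm:CrudeBounds} handles the three-pile case, replacing Lemma \ref{thm:NimberIncreasesWithA} by its $n$-pile corollary. The upper bound comes from a depth argument: every legal move in the selective compound $(*x_1)\ws(*x_2\ns\cdots\ns *x_n)$ must make a legal move in at least one selected component, and any legal Nim move removes at least one stone. Hence the total stone count strictly decreases each turn, so no play can last longer than $x_1+x_2+\cdots+x_n$ moves, i.e. the depth of the game tree is at most $x_1+\cdots+x_n$. Since the Sprague-Grundy value of any position is bounded above by the depth (longest path to a terminal) of its game tree --- the same fact invoked implicitly in Lemma \ref{thm:CrudeBounds} --- we obtain $\nimber(x_1,\ldots,x_n)\leq x_1+\cdots+x_n$.

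For the lower bound I would induct on $x_1$. In the base case $x_1=0$ the component $*0$ is terminal, so no selection involving it is legal, and the game degenerates to the ordinary disjunctive sum $*x_2\ns\cdots\ns *x_n$; the Sprague-Grundy theorem then gives $\nimber(0,x_2,\ldots,x_n)=x_2\ns\cdots\ns x_n=0+(x_2\ns\cdots\ns x_n)$, which matches the claimed bound. For the inductive step I assume $\nimber(x_1-1,x_2,\ldots,x_n)\geq (x_1-1)+(x_2\ns\cdots\ns x_n)$. Applying the $n$-pile monotonicity corollary to Lemma \ref{thm:NimberIncreasesWithA}, namely $\nimber(x_1,\ldots,x_n)>\nimber(x_1-1,x_2,\ldots,x_n)$, and using that nimbers are integers, I conclude $\nimber(x_1,\ldots,x_n)\geq x_1+(x_2\ns\cdots\ns x_n)$, which closes the induction.

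Since this is a direct generalization of Lemma \ref{thm:CrudeBounds}, I do not expect a genuine obstacle; the only points demanding care are (i) verifying that the $x_1=0$ position really reduces to disjunctive Nim rather than retaining the selective structure --- the point being that one cannot ``pass'' in the empty heap, so the only legal selection is the Nim-sum component --- and (ii) making sure the monotonicity input used is the $n$-pile corollary and not Lemma \ref{thm:NimberIncreasesWithA} itself. Both facts are already established among the preceding results, so the proof is essentially bookkeeping once the base case is justified.
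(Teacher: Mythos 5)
Your proof is correct and takes essentially the same route as the paper: the paper's proof of Corollary \ref{thm:easyBounds} simply states that the argument for Lemma \ref{thm:CrudeBounds} generalizes to $n$ piles --- depth of the game tree for the upper bound, induction on the auxiliary pile size via the monotonicity corollary for the lower bound --- which is precisely what you carry out. Your added care at the base case $x_1=0$ (reduction to the disjunctive sum) and your explicit use of the $n$-pile corollary of Lemma \ref{thm:NimberIncreasesWithA} merely spell out details the paper leaves implicit.
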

\begin{proof}
We see that the lower bound in Lemma \ref{thm:easyBounds} immediately generalizes to the case where we have arbitrary number of piles, as moves on the right hand side, as well as in the auxiliary pile can be replicated in a similar fashion. The upper bound also does, as the depth of the game still is a trivial upper bound on the nimber of the game.
\end{proof}

Now, we begin by providing a necessary and a sufficient condition for $\nimber(a,b,c)$ to simply evaluate to $a$, and then we generalize this to a complete proof of Theorem $\ref{thm:firstTheorem}$.

\begin{lemma}\label{thm:specialCase}
$\nimber(a,b,c)=a \Leftrightarrow$ $\exists k\in\mathds{N}$. $ b=c=k\cdot 2^{\lfloor \log_2 a\rfloor + 1}$.
\end{lemma}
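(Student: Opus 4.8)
The plan is to prove the biconditional in two stages: first dispose of the case $b\neq c$, then reduce everything to a statement about the single diagonal entry where the pattern first breaks.

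First I would use the lower bound of Lemma~\ref{thm:CrudeBounds} to force $b=c$. Indeed, if $b\neq c$ then $b\ns c>0$, so $\nimber(a,b,c)\geq a+(b\ns c)>a$; since the right-hand condition also requires $b=c$, both sides fail and there is nothing to prove. So assume $b=c\geq 1$ (the case $b=c=0$ is immediate, since then the Nim component is empty and $\nimber(a,0,0)=a$), let $2^{v}$ be the largest power of two dividing $b$, and note that the right-hand condition $2^{\lfloor\log_2 a\rfloor+1}\mid b$ is exactly the statement $a<2^{v}$. Because $b\ns c=0$ now gives $\nimber(a,b,b)\geq a$, the lemma is equivalent to: the game $(a,b,b)$ can reach a position of nimber exactly $a$ if and only if $a\geq 2^{v}$. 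Throughout I use that a move from $(a,b,b)$ is one of two kinds: (A) it only shrinks the auxiliary pile, giving $(a',b,b)$ with $a'<a$; or (B) it shrinks one Nim pile to some $b'<b$ (optionally also shrinking the auxiliary pile), giving $(a',b',b)$ with $a'\leq a$.

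For the direction $a<2^{v}\Rightarrow\nimber(a,b,b)=a$ I would induct on $a$ and show that no successor has nimber $a$. A type-(A) successor $(a',b,b)$ satisfies $a'<a<2^{v}$, so by the inductive hypothesis its nimber is $a'\neq a$. For a type-(B) successor $(a',b',b)$ the key point is that $b$ has its lowest set bit at position $v$; hence the highest bit in which $b$ and a smaller $b'$ differ, at which $b$ necessarily carries a one, lies at position $\geq v$, so $b\ns b'\geq 2^{v}$. The lower bound then gives $\nimber(a',b',b)\geq a'+(b\ns b')\geq 2^{v}>a$. Thus every successor misses the value $a$, and combined with $\nimber(a,b,b)\geq a$ this yields $\nimber(a,b,b)=a$.

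The direction $a\geq 2^{v}\Rightarrow\nimber(a,b,b)\neq a$ is where the real idea lies: rather than analyzing the nimbers of the asymmetric positions $(a',b',b)$ directly, which is precisely the chaotic behaviour the paper is about, I would use the monotonicity of Lemma~\ref{thm:NimberIncreasesWithA} to localize to the single value $a=2^{v}$. At $a=2^{v}$ the unique set bit of $a$ coincides with the lowest set bit of $b$, so clearing that bit is legal: the move emptying the auxiliary pile and sending one Nim pile $b\mapsto b-2^{v}$ reaches $(0,b-2^{v},b)$, whose nimber is $(b-2^{v})\ns b=2^{v}$. Hence $(2^{v},b,b)\rightarrow 2^{v}$, forcing $\nimber(2^{v},b,b)>2^{v}$. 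Now Lemma~\ref{thm:NimberIncreasesWithA} propagates this gap upward: for every $a\geq 2^{v}$ we get $\nimber(a,b,b)\geq\nimber(2^{v},b,b)+(a-2^{v})\geq a+1>a$, so the value $a$ is never attained. The main obstacle is exactly the temptation to handle the asymmetric successors case by case (clearing the lowest bit of $b$ reaches $a$ only when the highest bit of $a$ happens to be a set bit of $b$, which fails for the larger $a$); the resolution is to prove only the single base jump at $a=2^{v}$ and let monotonicity do the rest.
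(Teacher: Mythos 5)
Your proposal is correct and takes essentially the same route as the paper's proof: the lower bound of Lemma~\ref{thm:CrudeBounds} forces $b=c$, the bit argument $b\ns b'\geq 2^v>a$ disposes of moves touching a Nim pile in the $(\Leftarrow)$ direction, and your key move $(2^v,b,b)\rightarrow(0,b-2^v,b)$ with nimber $2^v$ is exactly the paper's Case~2 ``copy'' move at $a=2^k$. Your only repackaging is to establish the jump once at $a=2^v$ and propagate upward via Lemma~\ref{thm:NimberIncreasesWithA}, which is just an unrolled version of the paper's induction on $a$, whose Case~1 performs the same monotonicity step.
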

The theorem claims that $\nimber(a,b,c)=a$ if and only if $b=c$ is a multiple of a power of 2 strictly greater than $a$. Note this is just a special case of Theorem \ref{thm:firstTheorem}. The proof of the special case is easier to formalize, and generalizes painlessly, so we provide a proof.
\begin{proof}
We begin by the $(\Leftarrow)$ direction. If $k=0$, the statement is trivial. Therefore, let $b=c$ be a multiple of a power of 2 strictly greater than $a$. Thus in the binary representation, $b$ has as at least as many $0$s as the number of bits in $a$. It suffices to show $(a,b,b)\nrightarrow a$ to conclude $\nimber(a,b,b)=a$, since we have $\nimber(a,b,b)\geq a+(b\ns b) = a$ by Lemma 2.

\begin{equation}
\frac{
    \begin{array}[b]{r}
      \left( 1 \cdots x \cdots y \right)\\
      \left( 1\cdots 1\cdots 0\cdots0 \cdots0 \right) \\
      \wsns \left( 1\cdots1\cdots0\cdots0\cdots0 \right)
    \end{array}
  }{
    \left( \nimber(a,b,c)\right)
  }
\end{equation}
From diagram 1, we observe that any move that decreases $b$ to $b'$ ensures that $b\ns b' > a$, since a decrease in $b$ implies flipping a $1$ bit to the left of the leftmost bit in $a$, therefore in the xor operation, the bit from the other $b$ will fall down, to the left of $a$. So by the lower bound in Lemma 2, any such move will never obtain a nimber equal to $a$, since $\nimber(a,b,b')>a$. \\[0.04in]
We still need to show that $(a,b,b)\nrightarrow a$, but we are now only concerned with moves only decrease the first pile. For this case, we induct on $a$. Since we assume we can decrease $a$, $a$ has to be non-zero. When $a=1$, decreasing $a$ is equivalent to removing the first pile, thereby resulting in the game $(b\ns b)$ with nimber $0\neq a$. In the inductive step, we assume that we decrease the size of the first pile by $k$, yielding game $(a-k,b,b)$. By assumption, we have $b=k\cdot 2^{\lfloor \log_2 a\rfloor + 1}$. But by a decrease in $a$, we cannot change the fact that b is still a multiple of a power of two strictly greater than $a$. Hence, $b=l\cdot 2^{\lfloor \log_2 a-k\rfloor + 1}$, and the inductive hypothesis applies to show $\nimber(a-k, b, b)=a-k\neq a$. This concludes the induction, and the $(\Leftarrow)$ direction of the Theorem. \\[0.06in]
We will show the $(\Rightarrow)$ direction by contrapositive. When $b$ and $c$ are not the multiple of the power of two that we require, we want to show $\nimber(a,b,c)\neq a$ Suppose first that $b\neq c$. Then $b\ns c \neq 0$, and by the bound from Lemma 2, we see that $\nimber(a,b,c)>a$, so we are done. \\[0.04in]
Now, suppose $b=c$, but $b$ is not a multiple of a power of two strictly greater than a. We will show $(a,b,b)\rightarrow a$ by induction on a. \\ [0.04in]
In the base case, $a=1$. Then,
\begin{align*}
2^{log_2 \lfloor a \rfloor +1}
&= 2^{log_2 \lfloor 1 \rfloor +1} \\
&= 2^{1} \\
\end{align*}
Therefore, we deduce $b\neq2k$ by assumption, i.e. $b$ is odd. We observe that $b\ns (b-1)=1$, as $b-1$ is simply $b$ with the right-most bit inverted, since b is odd. Thus, $(1, b, b)\rightarrow 1$, and we have a base case. \\[0.04in]
In the inductive step, we consider $(a,b,b)$. We assume $b$ is not a multiple of a power of 2 strictly greater than $a$.\\[0.03in]
\textbf{Case 1. } $b$ also is not a multiple of a power of 2 strictly greater than $a-1$. In this case, the hypothesis applies to the game $(a-1, b, b)$, to show $(a-1, b, b)\rightarrow a-1$. From the bounds in Lemma 1 and 2, it is evident that:
\begin{align*}
    \nimber(a,b,b)&>\nimber(a-1,b,b)\\
                 &\geq (a-1) + 1 \\
                 &= a
\end{align*}
and thus we are done. \\[0.03in]
\textbf{Case 2. } $b$ is a multiple of a power of 2 strictly greater than $a-1$, but not a multiple of a power of two strictly greater than $a$. We conclude that in this case, $a=2^k$ for some $k$, as that is the only way the power of $2$ strictly greater than $a-1$ would not also be strictly greater than $a$. \\[0.03in]
We also see that $b$ is a multiple of $a$ in this case, and we thus see that in the base $2$ representation, $b$ has to have a $1$ bit at the $k^{th}$ index and thus contain a ``copy'' of $a$, as otherwise, $b$ would simply be the multiple of $2^{k+1}$, contradicting our assumption. (This is equivalent to stating that $b$ is an odd multiple of $a$.) Thus we have, $(a,b,b)\rightarrow b \ns (b-a)=a$, as desired. We show this bit argument in the diagram below.

\begin{equation}
\frac{
    \begin{array}[b]{r}
      \left( 1 \cdots 0 \cdots 0 \right)\\
      \left( 1\cdots 1\cdots 1\cdots0 \cdots0 \right) \\
      \wsns \left( 1\cdots1\cdots1\cdots0\cdots0 \right)
    \end{array}
  }{
    \left( \nimber(a,b,b)\right)
  }
\end{equation}
The above diagram gets converted to the below diagram, with the move that eliminates the first pile, and decreases $a$ from the second pile. Note that in the case when $a=b$, this procedure simply amounts to removing piles 1 and 2.
\begin{equation}
\frac{
    \begin{array}[b]{r}
      \left( 1\cdots 1\cdots 0\cdots0 \cdots0 \right) \\
      \ns \left( 1\cdots1\cdots1\cdots0\cdots0 \right)
    \end{array}
  }{
     \,\,\,\,\,\,\,\,\,\,\,\,\,\,\,\,\,\,\,\,\,\,\,\,\,\,\,\,\,\,\left(1 \cdots 0 \cdots 0 \right)
  }
\end{equation}

\end{proof}
We are now ready to prove Theorem \ref{thm:firstTheorem} in full generality. For convenience, we restate it below:\vspace{2mm}
\par \textbf{Theorem 1. } $\nimber(x_1,x_2,x_3, \cdots, x_n)=x_1 \Leftrightarrow$ $ (x_2\ns x_3\ns \cdots \ns x_n)=0$ and $ 2^{\lfloor \log_2 x_1\rfloor + 1}$ divides all of $(x_2,x_3, \cdots, x_n)$.\vspace{2mm}
\par The Theorem strengthens Lemma \ref{thm:easyBounds} to characterize all the Auxiliary Nim games where the nimbers are equivalent to the size of the first pile. Note that unlike in the statement of Lemma \ref{thm:easyBounds}, we do not and cannot mandate that all the values $(x_2,x_3, \cdots, x_n)$ are equivalent. We merely require that all the values xor to $0$ (in the $3$ pile game, this is equivalent to saying $b=c$).
\begin{proof}
For the $(\Leftarrow)$ direction, we have that all of $(x_2,x_3, \cdots, x_n)$ xor to $0$ and each have as many $0$s as the number of bits of $x_1$. Thus for any move that is not solely a decrease in the $(*x_1)$ pile, a decrease in a pile $(*x_i)$ to $(*x'_i)$ ensures that $(x_2\ns \cdots x'_i \ns \cdots \ns x_n)>x_1$, as a bit falls down to the left of $x_1$, and the xor was $0$ before the move, by assumption. For moves that decrease only the $(*x_1)$ pile, we can induct on the value of $(*x_1)$ to show that all such decreases will yield a nimber of $x_1-d$, where $d$ is the decrease. This step is identical in the proof for Theorem 1. \\[0.07in]
We now show the $(\Rightarrow)$ direction, again by contrapositive. By the lower bound in Lemma \ref{thm:easyBounds}, it follows immediately that $(x_2\ns x_3 \ns \cdots \ns x_n)=0$, as otherwise, $\nimber(x_1,x_2,x_3, \cdots, x_n)>x_1$. So we assume that there exists $x_i\in(x_2,x_3,\cdots, x_n)$ such that $2^{\lfloor \log_2 x_1\rfloor + 1}$ does not divide $x_i$. We again induct on the value of $x_1$. In the base case when $x_1=1$, we conclude $x_i$ is odd, thus the move that sets $x'_i=x_i-1$ will yield nimber $1$, just like in the previous proof, contradicting $\nimber(a,b,c, \cdots, z)=a$. We again separate our inductive step into two cases. If our inductive hypothesis applies to the same game with $x'_1=x_1-1$, we are done. Otherwise, $x_1=2^m$ for some $m$ power of $2$, and we have a $x_i$ such that $x_i$ is a multiple of $2^m$, but not $2^{m+1}$, and thus $x_i$ contains a ``copy'' of the bits of $x_1$, i.e. $x_i$ has a $1$ bit at the $m^{th}$ index. We set $x'_i=x_i-x_1$ to yield a game with nimber $x_1$, thus showing that the nimber of the original game could not have been $x_1$, concluding the proof.
\end{proof}

We note that despite the fact that the SG-values of $(*a)\ws(*b\ns*c)$ is complex when the value of $a$ is low, the SG value merely equals $a+b+c$, i.e. the upper bound, when $a$ is large enough. In the following section of the paper, we formalize this notion, and give some characterizations of the cases for when $\nimber(a,b,c)=a+b+c$.
\begin{definition}
For any $b,c \in \mathbb{N}$, we define $A(b,c)$ to be the minimum $a \in \mathbb{N}$ st. $\nimber(a,b,c) = a + b + c$
\end{definition}

Note that it is not necessarily clear from the definition that $A$ is even well defined. Soon, however, will prove this, by establishing an upper bound on $A(b,c)$.

\begin{lemma} \label{thm:NimberIsAlwaysSum}
If $A(b,c)$ is defined, for every $a > A(b,c)$, $\nimber(a,b,c) = a + b + c$.
\end{lemma}

\begin{proof}
This follows immediately from the lower bound provided by \Cref{thm:NimberIncreasesWithA} and the upper bound provided by \Cref{thm:CrudeBounds}.
\end{proof}

\begin{definition}
Let $n_i$ be the value of the $i^{th}$ digit of n in its binary representation, indexing from zero and the right.
We call $n$ a $gap$ in $a \ns b$ if $n=a \ns b$ or if at the leftmost index $i$ in the binary representation of n where n differs from $a \ns b$ we have $n_i=1$ and $a_i=b_i=0$.
\end{definition}
Note that if $n \geq 2^{\lceil log_2(max(a,b)) \rceil}$ then n is always a gap.

\begin{lemma} \label{lemma:nongapsareattainable}
If n is not a gap in $b \ns c$ then $(0,b,c) \rightarrow n$
\end{lemma}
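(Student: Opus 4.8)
The plan is to exploit that the auxiliary heap in $(0,b,c)$ is empty, so no move can touch it and $(0,b,c)$ is just the two-heap Nim game $*b\ns *c$. Using the base case $\nimber(0,x,y)=x\ns y$ established in the proof of Lemma \ref{thm:CrudeBounds}, the set of nimbers reachable from $(0,b,c)$ in a single move is exactly $\{\, b'\ns c : 0\le b'<b \,\}\cup\{\, b\ns c' : 0\le c'<c \,\}$. Thus proving $(0,b,c)\rightarrow n$ amounts to exhibiting one legal Nim move that makes the heap-XOR equal to $n$.

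Next I would reduce this to a statement about a single bit. To reach $n$ by shrinking the first heap I must take $b'=n\ns c$, which is legal precisely when $b'<b$; symmetrically, shrinking the second heap requires $c'=n\ns b$ with $c'<c$. Writing $d:=n\ns b\ns c$, these conditions become $b\ns d<b$ and $c\ns d<c$. I would then invoke the standard fact that for any $x$, the inequality $x\ns d<x$ holds if and only if the most significant set bit of $d$ occurs at an index where $x$ has a $1$. Let $i$ be the position of that top set bit of $d$; by construction $i$ is exactly the leftmost index at which $n$ disagrees with $b\ns c$. Combining the two cases, $n$ is reachable in one move if and only if $b_i=1$ or $c_i=1$, i.e.\ if and only if index $i$ is not simultaneously zero in both heaps.

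Finally I would match this against the definition of a gap. Since $n$ is assumed \emph{not} to be a gap in $b\ns c$, we first have $n\neq b\ns c$, so $d\neq 0$ and the index $i$ is well-defined. Moreover, the non-gap hypothesis says that at this leftmost differing index $i$ we are not in the excluded configuration $n_i=1$, $b_i=c_i=0$; because $i$ is a differing index (forcing $n_i=1$ whenever $b_i=c_i=0$), this is equivalent to ruling out $b_i=c_i=0$ outright. Hence $b_i=1$ or $c_i=1$, which is precisely the reachability criterion derived above, so the required move exists.

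The main obstacle is the bit-level bookkeeping in the middle step: correctly rewriting the heap inequalities through the XOR identity $n\ns c=b\ns d$, justifying the monotonicity criterion that $x\ns d<x$ iff the top set bit of $d$ is a $1$-bit of $x$, and then verifying that the index produced coincides verbatim with the leftmost index where $n$ differs from $b\ns c$ so that it lines up with the gap definition. Everything else, namely the reduction to two-heap Nim and the final case check, is routine.
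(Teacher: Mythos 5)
Your proposal is correct and takes essentially the same route as the paper's proof: both identify the leftmost index $i$ at which $n$ differs from $b \ns c$ and realize $n$ by the single Nim move $b' = n \ns c$ (or its symmetric counterpart in $c$), with legality coming down to $b_i = 1$ or $c_i = 1$. The only difference is packaging --- you phrase the decrease check as the standard fact that $x \ns d < x$ iff the top set bit of $d = n \ns b \ns c$ sits at a $1$-bit of $x$, which merges the paper's two explicit cases ($n_i=1$ with $b_i=c_i=1$, and $n_i=0$ with WLOG $b_i=1$) into one uniform criterion.
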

\begin{proof}
Consider the left most bit $i$ in n that differs from $b \ns c$.  Note that to get from $(0,b,c)$ to $n$ we will never have to alter bits to the left of index $i$. There are two cases.

\textbf{Case 1: }$n_i=1$.  Then $b_i=c_i=1$.  Let $b'=c \ns n$.  Clearly $b' \ns c=n$.  Further, $b'_j=b_j$ $\forall j>i$ as $b_j \ns c_j=n_j$ by the definition of a gap, and $b_i=1 > b'_i=0$, so $b>b'$.  Thus the move from $(0,b,c)$ to $(0,b',c)$ is valid, so $(0,b,c) \rightarrow n$ as desired.

\textbf{Case 2: } $n_i=0$. WLOG let $b_i=1$ and $c_i=0$.  Letting $b'=c \ns n$ as before, by the same logic we have the move from $(0,b,c)$ to $(0,b',c)$ is valid and $\nimber{(0,b',c)}=n$ as desired.
\end{proof}

We say $n$ is the $j^{th}$ gap in $a \ns b$ if $n$ is a gap and there are precisely $j-1$ gaps $n'$ such that $n'<n$. Note that $a\ns b$ will always be the first gap in $a\ns b$.
\begin{lemma} \label{lemma:gapsarelowerbounds}
Let n be the $j^{th}$ gap in $b \ns c$. Then $\nimber{(j-1,b,c)} \geq n$.
\end{lemma}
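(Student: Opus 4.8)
The plan is to prove the stronger reachability statement that every value $m$ with $0\le m<n$ satisfies $(j-1,b,c)\rightarrow m$; once this is in hand, the recursive $\mathrm{mex}$ definition of $\nimber$ gives $\nimber(j-1,b,c)\ge n$ for free. I would run an induction on the gap index $j$. The base case $j=1$ is immediate, since the first gap is $n=b\ns c$ and $\nimber(0,b,c)=b\ns c$, so the bound holds with equality. For the inductive step I fix the $j$-th gap $n$ (so now $j\ge 2$), take an arbitrary $m<n$, and split into two cases depending on whether $m$ is itself a gap.

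If $m$ is \emph{not} a gap, then by Lemma \ref{lemma:nongapsareattainable} there is a move in the $(b,c)$-component alone from $(0,b,c)$ to a position of nimber $m$ (its proof exhibits a single reduction of $b$ or $c$). From $(j-1,b,c)$ I would replicate exactly this reduction while simultaneously emptying the first pile. This is a legal selective move in both components, legal precisely because $j\ge 2$ makes the first pile nonempty, and it lands on the same position, so $(j-1,b,c)\rightarrow m$.

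If $m$ \emph{is} a gap, then since there are precisely $j-1$ gaps strictly below the $j$-th gap, $m$ is the $t$-th gap for some $t<j$. The inductive hypothesis yields $\nimber(t-1,b,c)\ge m$. If equality holds, the position $(t-1,b,c)$ already has nimber $m$ and is reached from $(j-1,b,c)$ by the single first-pile reduction $j-1\mapsto t-1$, which is legal because $t-1<j-1$. If instead $\nimber(t-1,b,c)>m$, then by the $\mathrm{mex}$ definition $m$ is a reachable nimber of $(t-1,b,c)$, so some move sends $(t-1,b,c)$ to a position $P$ with $\nimber(P)=m$. The key observation is that any such $P$ has first-pile coordinate at most $t-1<j-1$, so the identical move is available from $(j-1,b,c)$: I reduce the first pile to the required value (at most $t-1$) and carry out the same reduction of $b$ or $c$, a legal selective move precisely because the target first-pile value lies strictly below $j-1$. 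Hence $(j-1,b,c)\rightarrow m$ here as well.

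The hard part will be the gap case, and in particular the worry that $\nimber(t-1,b,c)$ may \emph{strictly} exceed the gap $m$ rather than equal it, so that $(t-1,b,c)$ cannot serve as a direct witness. The resolution is the lifting argument above: a strict inequality is exactly what guarantees, through $\mathrm{mex}$, that $m$ is reachable one step deeper from $(t-1,b,c)$, and the fact that any such deeper target keeps the first pile below $j-1$ is what lets me import that move into $(j-1,b,c)$. The step I would check most carefully is the bookkeeping that, across all admissible move types (first pile only; $b$ or $c$ only; both piles), the target first-pile coordinate always stays $\le t-1<j-1$, since this is what makes every lifted move legal and closes the induction.
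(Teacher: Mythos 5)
Your proof is correct and takes essentially the same route as the paper's: induction on the gap index, with non-gap values reached via Lemma \ref{lemma:nongapsareattainable} by emptying the first pile alongside the $b$/$c$ reduction, and gap values handled by lifting moves through a strict reduction of the first pile. The only difference is bookkeeping: the paper funnels all values up to the $(j-1)^{st}$ gap $n'$ through the single position $(j-2,b,c)$ and covers $[n'+1,n-1]$ as non-gaps, whereas you case on each $m<n$ individually and apply strong induction at the index $t$ of the gap $m$; your explicit mex-lifting step (that $\nimber(t-1,b,c)>m$ forces a move from $(t-1,b,c)$ to nimber $m$, importable into $(j-1,b,c)$ since the target first-pile value stays below $j-1$) spells out an argument the paper leaves implicit in ``replicating the rest of the move.''
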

\begin{proof}
Induction on $j$.  If $j=1$, then there are no gaps in $b \ns c$ less than $n$, so by \cref{lemma:nongapsareattainable}, $\nimber{(0,b,c)} \geq n$.  Now suppose $j>1$ and let $n'$ be the $(j-1)^{st}$ gap.  Then by the inductive hypothesis, $\nimber{(j-2,b,c)} \geq n'$, so $(j-1,b,c) \rightarrow i$. $\forall i \in[n']$, by reducing $j-1$ to $j-2$ and replicating the rest of the move.  But as there are no gaps between $n$ and $n'$, $(0,b,c) \rightarrow i$. $\forall i \in[n'+1,n-1]$.  Thus $\nimber{(j-1,b,c)} \geq n$ as desired.
\end{proof}
\begin{lemma} \label{lemma:a0UpperBound}
For any $b,c \in \mathbb{N}$, $A(b,c)$ is defined, and $A(b,c) \leq min(\sim b,\sim c)+1$, where $\sim x$ denotes the bitwise complement.
\end{lemma}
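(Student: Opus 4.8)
The plan is to prove the bound by exhibiting, for $a^\ast := \min(\sim b, \sim c)+1$, that $\nimber(a^\ast,b,c) = a^\ast + b + c$; by \Cref{thm:NimberIsAlwaysSum} together with the monotonicity of \Cref{thm:NimberIncreasesWithA} this forces $\nimber(a,b,c)=a+b+c$ for every $a \geq a^\ast$, so that $A(b,c)$ is defined and at most $a^\ast$. Since the game is symmetric in $b$ and $c$, I would assume without loss of generality that $c \geq b$, so that $\min(\sim b,\sim c)=\sim c$ and, writing $2^m$ for the least power of two exceeding $c$, $a^\ast = 2^m - c$ and the target value is $a^\ast + b + c = 2^m + b$. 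By the upper bound of \Cref{thm:CrudeBounds} it then suffices to show that every value in $[0,2^m+b)$ is the nimber of some position reachable from $(a^\ast,b,c)$, since any reachable position has strictly smaller total depth and hence nimber strictly below $2^m+b$.

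I would split the target interval at $2^m$. For the values below $2^m$: every non-gap of $b \ns c$ is reached directly by emptying the auxiliary pile and replaying the move of \Cref{lemma:nongapsareattainable}, while the gaps below $2^m$ are handled by descending the auxiliary pile one step to $(a^\ast-1,b,c)$ — this position, and every position it can reach, is reachable from $(a^\ast,b,c)$ by additionally decrementing the auxiliary pile, and by \Cref{lemma:gapsarelowerbounds} its nimber is at least the $(a^\ast)$th gap of $b\ns c$. A count of gaps (a value is a gap exactly when, at the top bit where it disagrees with $b\ns c$, that bit is $1$ while both $b$ and $c$ are $0$) shows that the $(a^\ast)$th gap already dominates every gap lying below $2^m$, so in combination with \Cref{lemma:nongapsareattainable} this covers all of $[0,2^m)$. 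For the values in $[2^m, 2^m+b)$, I would reduce the pile $b$ to each $b' < b$, reaching $(a^\ast, b', c)$; arguing by induction on $b$ with $c \geq b$ fixed as the larger pile, $A(b',c) \leq \min(\sim b',\sim c)+1 = \sim c + 1 = a^\ast$, whence $\nimber(a^\ast,b',c) = a^\ast + b' + c = 2^m + b'$, and as $b'$ ranges over $[0,b)$ these nimbers sweep out exactly $[2^m, 2^m+b)$. The induction bottoms out at $b=0$, where there is nothing above $2^m$ to reach and the uniform argument below $2^m$ alone concludes.

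The main obstacle is the quantitative heart of the second paragraph: pinning down the number of gaps of $b\ns c$ below $2^m+b$ and verifying that $a^\ast = \min(\sim b,\sim c)+1$ auxiliary levels are precisely enough to account for them. The subtlety is that the total number of gaps below the target $2^m+b$ exceeds $a^\ast$ by $b \wedge c$ (bitwise AND), so \Cref{lemma:gapsarelowerbounds} on its own cannot reach the top $b\wedge c$ values; these must instead be supplied by the main-pile/induction step, and one must check that the two mechanisms overlap so that no value in $[0,2^m+b)$ is left uncovered. I would also need to treat the boundary case in which $b$ is bitwise contained in $c$ (so that $a^\ast$ equals exactly the number of gaps below $2^m$), where the single largest such gap is recovered only because decrementing the auxiliary pile lands on $(a^\ast-1,b,c)$, a position whose own nimber is itself at least that gap.
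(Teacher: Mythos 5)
Your covering machinery is essentially sound, and in fact parallels the paper's proof closely: split the target interval, sweep the top segment by reducing one pile and inducting, handle $[0,2^m)$ via \Cref{lemma:nongapsareattainable} for non-gaps and \Cref{lemma:gapsarelowerbounds} for gaps, and conclude via the depth bound of \Cref{thm:CrudeBounds}. Your gap arithmetic is also right: when $b\ns c<2^m$ the gaps below $2^m$ number $2^m-(b\,|\,c)\leq 2^m-c=a^\ast$, and your observation that the gaps below $2^m+b$ exceed $a^\ast$ by exactly $b\wedge c$ — so that the interval $[2^m,2^m+b)$ must be covered by the $b$-reduction sweep rather than the auxiliary mechanism — is correct and is implicitly how the paper's proof works too. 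The genuine gap is your very first step, the WLOG. In this paper $\sim x$ is the complement of $x$ within its \emph{own} bit-length, i.e.\ $\sim x=2^{\lfloor\log_2 x\rfloor+1}-1-x$: this is forced by the identity $b+\sim b=a+b-1$ used inside the paper's proof of this lemma, by the expression $c\,|\,(b+\sim b)$ in \Cref{lemma:NumberOfGapsIsAnUpperBound}, and by $\sim(2^i+x)+1=2^i-x$ in \Cref{lemma:a0bothPowerOfTwo}. Under that convention, $c\geq b$ does \emph{not} give $\min(\sim b,\sim c)=\sim c$: for $b=3$, $c=4$ one has $\sim b=0$ and $\sim c=3$, so the lemma asserts $A(3,4)\leq 1$, while your argument establishes only $\nimber(4,3,4)=11$, i.e.\ $A(3,4)\leq 4$. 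In general your induction, which fixes $c$ as the \emph{larger} pile, proves $A(b,c)\leq\ \sim\!\max(b,c)+1$, strictly weaker than the stated bound whenever the smaller pile sits near the top of its own binary range (e.g.\ any $b=2^k-1$, where the lemma gives $A(b,c)\leq 1$). This is exactly why the paper's WLOG is phrased as $\sim b\leq\sim c$, a condition on complements, not on magnitudes.

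The error is repairable, because nothing in your mechanism actually uses $b\leq c$: if $b\ns c\geq 2^m$ there are simply no gaps below $2^m$ and that half is trivial, while the count $2^m-(b\,|\,c)\leq 2^m-c$ and the sweep of $[2^m,2^m+b)$ by reducing $b$ work for arbitrary $b$. So distinguish the pile by \emph{minimal complement} rather than maximal size — take $c$ with $\sim c=\min(\sim b,\sim c)$, set $a^\ast=\sim c+1$, and run your induction over all $b\in\mathbb{N}$ with $c$ fixed — and your argument then yields the lemma in full. With that repair, your route is a genuine mirror image of the paper's: the paper normalizes $\sim b\leq\sim c$, sets $a=\sim b+1$, sweeps the top range $[a+b,a+b+c-1]$ by reducing the \emph{other} pile $c$ (inducting on $b+c$, which requires the small observation that $a\geq A(b,c-i)$ along the way), and anchors the bottom range at $b$'s bit-length; you anchor the bit-range at the distinguished pile and reduce the undistinguished one, which has the mild advantage that $a^\ast$ stays fixed throughout the induction. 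But as written, the proposal proves a weaker inequality than the statement.
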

This theorem establishes a linear upper bound on $A(b,c)$ for any $b$ and $c$, thereby proving that $A(b,c)$ is well-defined for arbitrary values.  Further, it proves Conjecture $2$ and a special case of conjecture $3$ posed in \cite{exconim}.
\begin{proof}
Let $a=min(\sim b,\sim c)+1$.  It suffices to show $\forall n<a+b+c .$ $(a,b,c) \rightarrow n$.  Then by the upper bound from Lemma 2, $\nimber{(a,b,c)=a+b+c}$, so $A(b,c) \leq min(\sim b,\sim c)+1$ as desired.
Proceed by induction on $b+c$.
\\[0.04in]
For the base case, if $b+c=0$, $b=c=0$ and the claim is trivially true.  Now suppose $b+c>0$. We will case on whether $\sim b$ or $\sim c$ have the greater value, and assume WLOG that $\sim b \leq \sim c$. So $a = \sim b + 1$.
\par We will first show that $(a,b,c) \rightarrow n$  $\forall n \in[a+b, a+b+c-1]$. Let $i \in[c]$.  Observe that $min(\sim b, \sim c) \geq min(\sim b, \sim (c-i))$.  So by the induction hypothesis $a \geq A(b,c-i)$   and thus $\nimber{(a,b,c-i)}=a+b+c-i$.

We will now cover the rest of the range, so we want to show $(a,b,c) \rightarrow n$  $\forall n \in[0, a+b-1]$.  From Lemma 5, we have that if there are $n$ gaps in $b \ns c$ less than or equal to $b$ $+ \sim b=a+b-1$ then $\nimber{(n,b,c)} \geq n'>b$ $+\sim b$ where $n'$ is the $(n+1)^{st}$ gap.  So it suffices to show that there are at most $\sim b+1=a$ gaps less than $a+b$.  But by the definition of gaps, the number of gaps less than $a+b$ is maximized if whenever $b_i=0$ it is also the case that $c_i=0$ for $i \leq log_2(b)$. If this is the case there are precisely $2^i$ gaps for each $i \leq log_2(b)$  such that $b_i=0$ and one gap to account for $b \ns c$.  Summing over all of these gaps, there are $a=\sim b + 1$ in total and the proof is complete.
\end{proof}
There are indeed non-trivial instances where the upper bound provided by \Cref{lemma:a0UpperBound} is strict, as we will show shortly. However, it is natural to suspect from the proof of the Theorem that the actual number of gaps less than $a+b$ is a suitable candidate for a better upper bound (we had assumed that the number of gaps is as large as it possibly can be in the proof). We now prove an extension to \Cref{lemma:a0UpperBound} for when this actually is the case.
\begin{lemma}\label{lemma:NumberOfGapsIsAnUpperBound}
Let $b=2^i+k$ and $c=2^j+l$ where $k<2^i$ and $l<2^j$ and $j>i$. Also assume whenever $b$ has a $1$ bit at the $n^{th}$ index of its binary representation, so does $c$. Then, $A(b,c)$ is bounded above by the number of gaps in $b\ns c$ less than $c$ $|$ $(b+\sim b)$, where $|$ is the bitwise $or$ operator.
\end{lemma}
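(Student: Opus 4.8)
The plan is to follow the proof of Lemma~\ref{lemma:a0UpperBound} closely, but to replace the crude overcount $\sim b + 1$ of the gaps by their exact number, which the containment hypothesis lets us pin down. Set $a$ to be the number of gaps in $b \ns c$ that are less than $T := c \,|\, (b + \sim b)$. Since Lemma~\ref{thm:CrudeBounds} already gives $\nimber(a,b,c) \le a + b + c$, it is enough to show $(a,b,c) \to n$ for every $n \in [0, a+b+c-1]$; this forces $\nimber(a,b,c) = a+b+c$, and then $A(b,c) \le a$ follows from the definition of $A$ together with Lemma~\ref{thm:NimberIsAlwaysSum}.

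Before splitting into ranges I would record what the hypotheses buy. Because every $1$-bit of $b$ is a $1$-bit of $c$, we have $b \ns c = c - b$, and the positions that count as ``$b_p = c_p = 0$'' in the definition of a gap are exactly the positions where $c_p = 0$. Also $b + \sim b = 2^{i+1}-1$, so $T$ is $c$ with its bottom $i+1$ bits set to $1$: it agrees with $c$ above index $i$ and has leading bit $2^j$.

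The lower range $[0, T-1]$ is handled purely by gaps. By construction there are exactly $a$ gaps below $T$, so the $(a+1)^{st}$ gap is at least $T$, and Lemma~\ref{lemma:gapsarelowerbounds} gives $\nimber(a,b,c) \ge T$; hence $(a,b,c) \to n$ for all $n < T$. The upper range $[a+b, a+b+c-1]$ is handled by reductions of the $*c$ heap: the move to $(a,b,c')$ is legal and, whenever $a \ge A(b,c')$, has nimber $a+b+c'$, so letting $c'$ run over $[0, c-1]$ produces every value of $[a+b, a+b+c-1]$. Securing $a \ge A(b,c')$ I would do by induction on $c$, invoking the present statement when the containment hypothesis survives the reduction and falling back on Lemma~\ref{lemma:a0UpperBound} otherwise.

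The two ranges cover everything precisely when $T \ge a+b$, and establishing this inequality---equivalently, that the number of gaps below $T$ is at most $T-b$---is the main obstacle. This is the analogue of the gap count in Lemma~\ref{lemma:a0UpperBound}, but now carried out against the threshold $T$ rather than $2^{i+1}$; I expect to prove it by summing $2^p$ over the zero-positions $p$ of $c$ lying below index $j$, truncated by $T$, and verifying that the total never reaches $T-b$. The containment hypothesis is essential here, since it is exactly what pins the contributing positions to the zeros of $c$ and prevents the gap count from overflowing the window $[T, a+b+c-1]$ left for the reductions.
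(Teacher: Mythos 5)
There is a genuine gap, and it sits exactly where your proof diverges from the paper's. The paper proves this lemma by induction on $b$, covering the top range $[a+c,\,a+b+c-1]$ by reductions of the $*b$ heap: under the containment hypothesis the joint-zero positions of $(b,c)$ are exactly the zero bits of $c$, which a reduction of $b$ cannot enlarge, and the threshold $c\,|\,(b'+\sim b')$ only shrinks, so the gap count for $(b',c)$ never exceeds $a$ and the induction hypothesis applies; the bottom range below $a+c$ is then handled by \Cref{lemma:nongapsareattainable} and \Cref{lemma:gapsarelowerbounds}, much as you do. You instead induct on $c$ and cover $[a+b,\,a+b+c-1]$ by moves to $(a,b,c')$, which requires $a \geq A(b,c')$ for \emph{every} $c' < c$. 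That monotonicity is false: the gap count is not monotone under reductions of $c$, and your fallback bound from \Cref{lemma:a0UpperBound}, namely $\min(\sim b, \sim c')+1$, can strictly exceed $a$ (indeed $a \leq\, \sim b + 1$ always holds under the containment hypothesis, and usually strictly, since the zeros of $c$ below index $i$ form a subset of the zeros of $b$). Concretely, take $b=2^i$ and $c=2^j+2^{i+1}-1$ with $i\geq 1$ and $j\geq i+2$: the hypotheses hold, $T = c\,|\,(b+\sim b) = c$, and the only gap below $T$ is $b\ns c = 2^j+2^i-1$, so $a=1$; but for $c'=2^j+2^i<c$, Theorem \ref{thm:conjectureone} gives $\nimber(a',b,c') = 2^j+a'$ for all $a'<2^i$, so $A(b,c')=2^i$, which is far larger than $1$, and your move to $(1,b,c')$ has nimber $2^j+1$, nowhere near $1+b+c'$. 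The values in your top range are still reachable, but only via moves that touch $b$ --- which is why the paper's choice of induction variable is forced, not cosmetic.

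Two smaller remarks. First, the step you flag as the main obstacle --- that the number of gaps below $T$ is at most $T-b$ --- is actually the easy part: under containment the gaps below $T$ consist of $b\ns c$ together with, for each zero bit $p\leq i$ of $c$, at most $2^p$ further gaps, so $a \leq 1+(T-c) \leq T-b$, using $c > b$ (which follows from $j>i$). Second, your preliminary observations ($b\ns c = c-b$, joint zeros are exactly the zeros of $c$, and $T$ is $c$ with its bottom $i+1$ bits filled) are correct and are the same facts the paper exploits; they just need to be cashed in on the $b$-side of the induction, where they yield the monotonicity your $c$-side argument lacks.
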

\begin{proof}
Proof is by induction on $b$.\\
For the base, note $b=1$ and the claim holds for any valid choice of $c$ by \Cref{lemma:a0UpperBound}.
Now let $b$ be given, $c$ satisfying the conditions of the claim, and $a$ the number of gaps in $b\ns c$ less than $c$ $|$ $(b+\sim b)$. We will show $\nimber(a,b,c)=a+b+c$. \par We begin by noting that when we decrease $b$ to $b'$, the number of gaps less than $c$ $|$ $(b+\sim b)$ cannot increase. This is because by making a decrease in $b$, we cannot create a new index $n$ where $b$ and $c$ have both $0$ bits that did not exist originally, by the assumption. Therefore, $(a,b,c) \rightarrow n.$ $\forall n\in[a+c,a+b+c-1]$ by reducing $b$ and applying the induction hypothesis.  For the rest of the range, note that $a+c=c$ $|$ $(b+\sim b)$. Values less than $a+c$ are either attainable by bit arguments by \Cref{lemma:nongapsareattainable} or they are one of the $a$ gaps less than in $b\ns c$. In that case by \Cref{lemma:gapsarelowerbounds} $(a,b,c) \rightarrow n.$ $\forall n\in[0,a+c]$. Thus $\nimber(a,b,c)=a+b+c$, and $A(b,c)\leq a$, as desired.
\end{proof}
\par Unfortunately, the upper bound shown in \Cref{lemma:a0UpperBound} does not generalize in the obvious sense to the game with arbitrary amount of piles. However, we can show that $A(x_1,x_2, \cdots, x_n)$ is well-defined, and is bounded above quadratically. This was the statement of Theorem \ref{thm:bigA}, which we reproduce below for convenience.\vspace{2mm}
\par \textbf{Theorem 2. } Let $(x_1,x_2 \cdots, x_n)$ be an Auxiliary-Nim game with $n$-many piles. Then, $A(x_2, \cdots, x_n)$ is well-defined. Furthermore, $A(x_2, \cdots, x_n)$ grows quadratically with respect to the sum $x_2+ \cdots + x_n$. \vspace{2mm}
\begin{proof}
Proof is by induction on $x_2+\cdots+x_n$. When the sum is $0$, $A(x_2, \cdots, x_n)$ is trivially $0$ also. Otherwise let the sum be any positive integer. We know that if we make a decrease in any of the piles $x_2$ through $x_n$, the resulting collection of piles have a well-defined $A$ value, by induction. We set: $$a^*=A(x_2 -1, x_3, \cdots, x_n) + x_2 + \cdots + x_n$$ Then $\nimber(a^*, x_2, \cdots, x_n)>a^*$ by the lower bound from Corollary 2. For the remaining nimbers, we can simply consider the move when we subtract $1$ from the first pile ($*x_2$), and the nimber of the resulting game will hit the upper bound as long as we don't subtract more than $ x_2 + \cdots + x_n$ from $a^*$. Luckily, we only need to remove up to this much to hit all the nimbers in the range $[a^*, a^* + x_2 + \cdots + x_n]$. This concludes the proof.
\end{proof}

We are now in a position to begin proving explicit characterizations of $A(b,c)$ in several cases. We will make use of the following lemma which lower bounds the size of $A(b,c)$. Afterwards, we will show that in some non-trivial instances, the lower bound matches the upper bound derived from \Cref{lemma:a0UpperBound}.
\begin{lemma} \label{lemma:recursiveLowerBound}
$A(b,c)\geq min(A(b-1,c), A(b,c-1))$
\end{lemma}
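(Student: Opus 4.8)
The plan is to prove the contrapositive. I will assume $A(b,c) < \min(A(b-1,c),A(b,c-1))$ and derive a contradiction. Set $a = A(b,c)$. By the definition of $A$ we have $\nimber(a,b,c) = a+b+c$, while from $a < A(b-1,c)$ and $a < A(b,c-1)$ together with the upper bound of \Cref{thm:CrudeBounds} we obtain $\nimber(a,b-1,c) < a+(b-1)+c$ and $\nimber(a,b,c-1) < a+b+(c-1)$. Moreover, since $a$ is the \emph{minimum} witness for the pair $(b,c)$, the value $a-1$ lies below it, so $\nimber(a-1,b,c) < (a-1)+b+c$ as well. The goal is then to contradict $\nimber(a,b,c)=a+b+c$ by exhibiting a single value below $a+b+c$ that $(a,b,c)$ cannot reach.

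The key observation is to focus on the value $N = a+b+c-1$. Since $\nimber(a,b,c) = a+b+c = \operatorname{mex}$ of the successor nimbers, every value in $[0,a+b+c-1]$, and in particular $N$, must appear as the nimber of some successor; I will argue this is impossible. A legal move in $(a,b,c)$ lowers the auxiliary pile $a$ and/or makes a single Nim move inside $*b \ns *c$ (which decreases exactly one of $b,c$); hence every successor has pile-sum strictly less than $a+b+c$, and the only successors whose pile-sum equals $a+b+c-1$ are the three ``minimal'' ones obtained by reducing a single pile by exactly one, namely $(a-1,b,c)$, $(a,b-1,c)$, and $(a,b,c-1)$. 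Every other successor has pile-sum at most $a+b+c-2$, so by \Cref{thm:CrudeBounds} its nimber is at most $a+b+c-2 < N$.

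It therefore remains only to examine the three distinguished successors, and here the inequalities recorded in the first paragraph do exactly the work: $\nimber(a-1,b,c)$, $\nimber(a,b-1,c)$, and $\nimber(a,b,c-1)$ are each strictly smaller than $N = a+b+c-1$. Consequently no successor of $(a,b,c)$ has nimber $N$, so $N$ is excluded and $\nimber(a,b,c) \le N < a+b+c$, contradicting $\nimber(a,b,c)=a+b+c$. This forces $A(b,c) \ge \min(A(b-1,c),A(b,c-1))$, as claimed (the statement implicitly presumes $b,c \ge 1$ so that the right-hand side is defined). I expect the only delicate point to be the clean bookkeeping of successors by pile-sum, in particular justifying that a selective move decreases the total by exactly one only in the three listed ways, together with the correct use of the depth upper bound, rather than any subtle Sprague--Grundy computation.
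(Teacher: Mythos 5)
Your proof is correct and takes essentially the same approach as the paper's: both assume for contradiction that $a=A(b,c)<\min(A(b-1,c),A(b,c-1))$, note that the mex definition forces some successor of $(a,b,c)$ to have nimber $a+b+c-1$, observe via the depth upper bound of \Cref{thm:CrudeBounds} that only the three single-decrement positions $(a-1,b,c)$, $(a,b-1,c)$, $(a,b,c-1)$ could achieve this, and rule each out by the definition of $A$ and the assumption. Your write-up simply makes explicit the pile-sum bookkeeping that the paper leaves implicit.
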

\begin{proof}
AFSOC, $a=A(b,c)<min(A(b-1,c), A(b,c-1))$ and consider $(a,b,c)$.  Then $(a,b,c)\rightarrow a+b+c-1$.  But as $\nimber{(a,b,c)}\leq a+b+c$ by the upper bound from Lemma 2, we can only reach this value reducing one of $a,b,c$ by exactly $1$.  But none of $\nimber{(a-1,b,c)}$, $\nimber{(a,b-1,c)}$, or $\nimber{(a,b,c-1)}$ can be $a+b+c-1$ by the definition of $A(b,c)$, and the assumption. This is a contradiction.
\end{proof}

Note that the proof for \Cref{lemma:recursiveLowerBound} generalizes similarly to give a lower bound for $A(x_1,x_2,...,x_n)$.

\begin{corollary}
$A(x_1, \cdots, x_n)\geq min(A(x_1-1, \cdots, x_n), \cdots ,A(x_1, \cdots, x_n-1))$
\end{corollary}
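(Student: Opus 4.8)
The plan is to mirror the proof of Lemma \ref{lemma:recursiveLowerBound} essentially verbatim, since the only feature of the three-pile game that proof exploits is the selective-compound move structure, and this carries over unchanged to $n$ nim-piles. I write $a$ for the auxiliary pile, so that $A(x_1, \ldots, x_n)$ denotes the least $a$ with $\nimber(a, x_1, \ldots, x_n) = a + x_1 + \cdots + x_n$, and I argue for a contradiction. Suppose $a = A(x_1, \ldots, x_n)$ satisfies $a < \min_i A(x_1, \ldots, x_i - 1, \ldots, x_n)$, and set $S = a + x_1 + \cdots + x_n$, so that $\nimber(a, x_1, \ldots, x_n) = S$ by the definition of $A$. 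Then the $mex$ definition forces $(a, x_1, \ldots, x_n) \rightarrow S - 1$.

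The key step is to pin down exactly which positions of nimber $S-1$ can be reached in one move. Every selective-compound move either decreases $a$, decreases exactly one $x_i$ (a disjunctive-sum move in the nim component), or does both at once; in every case the total pile-sum drops by at least one, and it drops by at least two unless the move is a single unit decrement of a single pile. Since any target position $(a', x_1', \ldots, x_n')$ of nimber $S-1$ has $a' + \sum_i x_i' \geq S - 1$ by the upper bound of Corollary \ref{thm:easyBounds}, while the move itself guarantees $a' + \sum_i x_i' \leq S - 1$, the sum must equal $S-1$ exactly. Hence the only candidates are $(a-1, x_1, \ldots, x_n)$ and $(a, x_1, \ldots, x_i - 1, \ldots, x_n)$ for $1 \leq i \leq n$, each of which would have to attain nimber equal to its own pile-sum.

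Finally, I rule out every candidate using the definition of $A$ together with the standing assumption. For $(a-1, x_1, \ldots, x_n)$ to have nimber $(a-1) + \sum_i x_i = S-1$ would require $a - 1 \geq A(x_1, \ldots, x_n) = a$, which is impossible; for $(a, x_1, \ldots, x_i-1, \ldots, x_n)$ to have nimber equal to its sum would require $a \geq A(x_1, \ldots, x_i-1, \ldots, x_n)$, contradicting $a < \min_i A(x_1, \ldots, x_i-1, \ldots, x_n)$. Thus no move reaches $S-1$, contradicting $(a, x_1, \ldots, x_n) \rightarrow S-1$, and the claimed inequality follows.

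I expect the only genuinely load-bearing point, and the one worth stating carefully rather than deferring to ``similar reasoning'', to be the claim that reaching nimber $S-1$ forces a single unit decrement. This is precisely where the upper bound of Corollary \ref{thm:easyBounds} interacts with the fact that a selective move never decreases the total pile-sum by less than the number of piles it touches; everything else is bookkeeping against the definition of $A$.
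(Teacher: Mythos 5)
Your proof is correct and takes essentially the same route as the paper, which establishes this corollary simply by noting that the proof of \Cref{lemma:recursiveLowerBound} generalizes. Your write-up is exactly that generalization, with the one assertion the paper leaves implicit --- that reaching nimber $S-1$ forces a unit decrement of a single pile, via the upper bound of \Cref{thm:easyBounds} --- spelled out correctly.
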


\Cref{lemma:recursiveLowerBound} also allows us to characterize $A(b,c)$ when b and c are sufficiently close, as will be explicitly stated in \Cref{lemma:a0bothPowerOfTwo}.

\begin{lemma}\label{lemma:a0bothPowerOfTwo}
$A(2^i+x,2^i+y)=2^i-max(x,y)$ for $0\leq x,y < 2^i$.
\end{lemma}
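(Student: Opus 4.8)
The plan is to prove the two inequalities $A(2^i+x,2^i+y)\le 2^i-\max(x,y)$ and $A(2^i+x,2^i+y)\ge 2^i-\max(x,y)$ separately. The upper bound is immediate from \Cref{lemma:a0UpperBound}: writing $b=2^i+x$, the leading bit of $b$ sits at index $i$, so $\sim b=(2^{i+1}-1)-b=2^i-1-x$, and likewise $\sim c=2^i-1-y$; hence $\min(\sim b,\sim c)+1=2^i-\max(x,y)$. For the lower bound it suffices, by the monotonicity of \Cref{thm:NimberIncreasesWithA} together with the definition of $A$, to show $\nimber(2^i-\max(x,y)-1,b,c)\neq a+b+c$ for the single value $a=2^i-\max(x,y)-1$. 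Assuming WLOG that $x\ge y$, I would induct on $x+y$. The interior of the induction is handled by \Cref{lemma:recursiveLowerBound}: when $y\ge 1$ both neighbours $A(b-1,c)$ and $A(b,c-1)$ stay inside the block $[2^i,2^{i+1})$ with strictly smaller offset-sum, and checking the two relevant maxima ($\max(x,y-1)=x$ and $\max(x-1,y)\le x$) shows each is $\ge 2^i-x$, so $\min(A(b-1,c),A(b,c-1))\ge 2^i-x$ as needed.

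The main obstacle is the boundary $\min(x,y)=0$, i.e.\ the axis $A(2^i+x,2^i)$ (which also contains the corner $A(2^i,2^i)$). Here \Cref{lemma:recursiveLowerBound} is useless: the neighbour $A(2^i+x,2^i-1)$ lives in a lower block and can be as small as $0$ (already $A(2,1)=0$), so the recursion gives only the trivial bound. Instead I would prove directly the sharper statement that along this axis the crude lower bound of \Cref{thm:CrudeBounds} is attained exactly: for all $0\le a'\le 2^i-1-x$,
\[
\nimber(a',\,2^i+x,\,2^i)=a'+x .
\]
Taking $a'=2^i-1-x$ gives nimber $2^i-1$, which is strictly below the depth $3\cdot 2^i-1$, so $\nimber\neq a+b+c$ and the axis case closes with $A(2^i+x,2^i)\ge 2^i-x$. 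I would establish this identity by a double induction, outer on $x$ and inner on $a'$. The outer base $x=0$ is exactly \Cref{thm:specialCase}, since $\nimber(a',2^i,2^i)=a'$ whenever $2^i$ is a multiple of $2^{\lfloor\log_2 a'\rfloor+1}$, which holds for every $a'<2^i$; the inner base $a'=0$ is the direct computation $(2^i+x)\ns 2^i=x$.

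Since $\nimber(a',b,c)\ge a'+x$ is already supplied by \Cref{thm:CrudeBounds}, the entire content of the inductive step is to show that $a'+x$ is not the nimber of any successor of $(a',2^i+x,2^i)$. The crucial arithmetic, used everywhere, is that $a'+x\le 2^i-1<2^i$ in this range. The successors fall into five families (reduce $a'$; reduce $b$; reduce $c$; reduce $a'$ and $b$; reduce $a'$ and $c$). Any move that drops $b$ or $c$ below $2^i$ uncancels the bit at index $i$ in $b\ns c$, so by \Cref{thm:CrudeBounds} that successor has nimber $\ge 2^i>a'+x$; moves that only reduce $a'$ give nimber $a''+x<a'+x$ by the inner hypothesis; and moves leaving the second pile equal to $2^i+x'$ for some $x'<x$ (with first pile $a''\le a'$) fall under the outer hypothesis, yielding nimber $a''+x'\le a'+x'<a'+x$. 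In every case the successor nimber differs from $a'+x$, so $\nimber(a',2^i+x,2^i)=a'+x$.

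The hard part is therefore isolating and proving this axis identity; the only external input it needs is \Cref{thm:specialCase} for its base case. Once it is in hand, the off-axis cases reduce to a routine descent on $x+y$ via \Cref{lemma:recursiveLowerBound}, and the matching upper bound is free from \Cref{lemma:a0UpperBound}, completing the proof.
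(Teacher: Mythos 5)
Your proof is correct, and its overall skeleton matches the paper's: the upper bound is read off from \Cref{lemma:a0UpperBound} exactly as you compute it, and the interior of the lower bound is the same induction on $x+y$ powered by \Cref{lemma:recursiveLowerBound}, with the same check of the two maxima. The genuine divergence is at the boundary $\min(x,y)=0$, which you correctly identify as the only hard point. The paper dispatches it in one line by citing \Cref{thm:conjectureone} with $k=0$, $b=2^i$, $c=2^i+x$: there the $(a+1)^{st}$ gap in $b\ns c=x$ is $a+x$ for $a<2^i-x$, so $A(2^i+x,2^i)=2^i-x$. This is a forward reference --- \Cref{thm:conjectureone} is stated and proved (via the gap machinery of \Cref{lemma:nongapsareattainable} and \Cref{lemma:gapsarelowerbounds}) only later in the paper. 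You instead prove precisely this specialization, $\nimber(a',2^i+x,2^i)=a'+x$ for $a'\leq 2^i-1-x$, from scratch by a double induction whose only external inputs are \Cref{thm:specialCase} (for the corner $x=0$) and \Cref{thm:CrudeBounds}. Your successor analysis is complete and sound: the five move families are exactly the moves of $*a\ws(*b\ns *c)$, the arithmetic $a'+x\leq 2^i-1<2^i$ makes the bit-uncancelling argument kill every move dropping a pile below $2^i$, and the inner/outer hypotheses cover the remaining families with strictly smaller nimbers; your reduction of the lower bound on $A$ to the single value $a=2^i-\max(x,y)-1$ is legitimate by \Cref{thm:NimberIncreasesWithA} (this is \Cref{thm:NimberIsAlwaysSum}). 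The trade-off: the paper's route is shorter and reuses a theorem it needs anyway (e.g.\ as the base case of \Cref{thm:same-char}), while yours is self-contained, avoids the forward dependence, and is more elementary, at the cost of rederiving by hand what is a special case of the gap-counting theorem.
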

\begin{proof}
$2^i - max(x,y)=\sim(2^i + x)+1$ is precisely the upper bound given by \Cref{lemma:a0UpperBound}, so it suffices to show our lower bound derived from Lemma \ref{lemma:recursiveLowerBound} corresponds with this as well.  This is done by induction on $x+y$.

For the base cases, let $y=0$. Then by \Cref{thm:conjectureone}, $A(2^i + x, 2^i)=2^i-x$, as $x<2^i$.

Now suppose the claim holds for $x+y=n$ and consider the case where $x'+y'=n+1$. WLOG, we can consider the case where $x'=x+1$ and $y'=y$. We can also assume $x,y>0$ since the other cases are covered already, meaning we can safely assume $y-1\geq0$ and apply the inductive hypothesis. By \Cref{lemma:recursiveLowerBound}, we have that:
\begin{align*}
    A(2^i+x+1, 2^i+y)&\geq min(A(2^i+x, 2^i+y), A(2^i+x+1, 2^i+y-1)) \\
    &=min(2^i - max(x,y), 2^i-max(x+1, y-1)) &&\text{By IH} \\
    &=2^i - max(x+1,y)
\end{align*}
Thus the lower bound matches the upper bound by induction.
\end{proof}

\par With this we can give a characterization of $\nimber{(a,b,c)}$ when $\lfloor\log_2(b)\rfloor=\lfloor\log_2(c)\rfloor$. In order to do this, however, we will need a result from Boros et al., which we restate below for convenience:

\begin{lemma}\label{lemma:addingPowerOfTwo}
Suppose that $a,b,c,i \in \mathbb{N}$. If $\nimber(a,b,c) < 2^i$  then $\nimber(a,b+2^i,c+2^i) = \nimber(a,b,c)$.  On the other hand, if $\nimber(a,b,c) < 2^i$ then $\nimber(a,b+2^i,c+2^i) \geq \nimber(a,b,c)$

\end{lemma}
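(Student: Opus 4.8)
The plan is to prove both implications by strong induction on $a+b+c$, with $i$ held fixed. First I record that the statement as printed repeats the hypothesis $\nimber(a,b,c)<2^i$ in both clauses; the intended second clause is: if $\nimber(a,b,c)\ge 2^i$ then $\nimber(a,b+2^i,c+2^i)\ge\nimber(a,b,c)$ (this is the form actually invoked in \Cref{thm:same-char}). Throughout write $P=(a,b,c)$ and $P^+=(a,b+2^i,c+2^i)$, and recall that a move from either game reduces $a$ and/or exactly one of the two Nim piles.

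The structural heart is a dichotomy on the options of $P^+$. If a move reduces the $b$-pile from $b+2^i$ to some $\beta\ge 2^i$ (and possibly lowers $a$ to $a'$), the result $(a',\beta,c+2^i)$ is exactly the lift $O^+$ of the genuine option $O=(a',\beta-2^i,c)$ of $P$; I call these \emph{lifted options}, and since they have strictly smaller total size the inductive hypothesis applies to them. If instead the reduced pile drops below $2^i$, say the $b$-pile becomes $\beta<2^i$, the result $(a',\beta,c+2^i)$ is a \emph{new option}. The key fact is that every new option has nimber at least $2^i$: by the lower bound of \Cref{thm:CrudeBounds}, $\nimber(a',\beta,c+2^i)\ge \beta\ns(c+2^i)$, and because $c+2^i\ge 2^i$ has its highest set bit at an index $\ge i$ while $\beta<2^i$ has no bit there, that bit survives the xor, so $\beta\ns(c+2^i)\ge 2^i$. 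This argument is insensitive to carries, so no side hypothesis $b,c<2^i$ is needed, and the symmetric statement holds when the $c$-pile is collapsed.

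For the equality clause, set $g:=\nimber(P)<2^i$ and run the mex computation on $P^+$. Each $m<g<2^i$ is $\nimber(O)$ for some option $O$ of $P$ with $\nimber(O)<2^i$, so by the inductive hypothesis its lift $O^+$ is an option of $P^+$ with $\nimber(O^+)=m$; hence $P^+\to m$ for all $m<g$. No option of $P^+$ attains $g$: a new option has nimber $\ge 2^i>g$, and a lifted option $O^+$ has either $\nimber(O^+)=\nimber(O)\ne g$ (since $g$ is excluded as the mex of $P$'s option-nimbers) or $\nimber(O^+)\ge 2^i>g$, both by the inductive hypothesis. Therefore $\nimber(P^+)=g$.

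For the inequality clause, set $g:=\nimber(P)\ge 2^i$. The same lifting argument immediately gives $P^+\to m$ for every $m<2^i$ (these values come from low options of $P$, whose lifts preserve their nimber by the inductive hypothesis), so $\nimber(P^+)\ge 2^i$. The remaining, and genuinely harder, task is to show $P^+\to m$ for every $m$ in the window $[2^i,g)$. \textbf{This is the main obstacle.} Lifting is useless here: the inductive hypothesis only yields $\nimber(O^+)\ge\nimber(O)$ for options with $\nimber(O)\ge 2^i$, so a lift need not hit $m$ exactly — indeed $(0,3,0)$ with $i=1$ has $g=3$ and reaches the value $2\in[2^i,g)$ only through the \emph{new} option $(0,0,2)$, never through a lift. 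I therefore expect the window $[2^i,g)$ to be swept out precisely by the new options $(a',\beta,c+2^i)$ and their $c$-pile mirrors, and the plan is to show, as $\beta$ and $a'$ vary, that these positions realize a contiguous block of nimbers beginning at $2^i$ and extending up to $g-1$, using the gap machinery of \Cref{lemma:nongapsareattainable,lemma:gapsarelowerbounds} to pin down their reachable values. Establishing that contiguity cleanly is the crux of the argument and is where the detailed work must go.
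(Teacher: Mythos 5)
Your structural setup is correct and well observed: the dichotomy of the options of $(a,b+2^i,c+2^i)$ into lifts of options of $(a,b,c)$ versus new options where a pile drops below $2^i$, the use of the lower bound from \Cref{thm:CrudeBounds} to show every new option has nimber at least $2^i$ (and your remark that this is carry-insensitive, so no hypothesis $b,c<2^i$ is needed), and your diagnosis of the misprint in the second clause (the intended hypothesis is $\nimber(a,b,c)\geq 2^i$, which is the form \Cref{thm:same-char} actually uses) are all right. But as submitted this is a plan with an acknowledged hole, not a proof. The inequality clause requires showing $(a,b+2^i,c+2^i)\rightarrow m$ for every $m$ in the window $[2^i,g)$ where $g=\nimber(a,b,c)$, and you explicitly leave that step open. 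Nothing you have written forces the new options $(a',\beta,c+2^i)$ and their mirrors to realize a contiguous block of nimbers sweeping that window: their nimbers are positions of exactly the kind being analyzed, your own example $(0,3,0)$ shows lifts cannot substitute, and the gap machinery of \Cref{lemma:nongapsareattainable} and \Cref{lemma:gapsarelowerbounds} supplies reachability of non-gaps and lower bounds, not the exact values needed to hit each $m$. Moreover, because your induction is joint, the unproved inequality clause contaminates the half you present as finished: your argument that no lifted option $O^+$ attains $g$ invokes the inductive hypothesis of the inequality clause for options $O$ with $\nimber(O)\geq 2^i$, so the equality clause as written rests on the missing clause at smaller sizes and cannot be detached from it.

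For comparison, the paper itself offers no argument here: the lemma is imported verbatim, with the proof deferred to Lemma 7 of Boros et al.\ \cite{exconim}. A self-contained inductive proof along your lines would therefore be a genuine addition to the paper, and your skeleton (joint induction on $a+b+c$ with the lift/new-option dichotomy) is a reasonable scaffold for one, but until the contiguity analysis for the window $[2^i,g)$ is actually carried out --- or replaced by some other mechanism for producing every nimber in $[2^i,g)$ among the options --- the lemma is not proved.
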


\begin{proof}
See the proof of Lemma 7 in \cite{exconim}.
\end{proof}
We are now ready to prove Theorem \ref{thm:same-char}, which we restate below.\vspace{2mm}\\
\textbf{Theorem 3. }
Suppose $b=2^i+k$ and $c=2^i+l$ with $k<l<2^i$.  Then
\[ \nimber{(a,b,c)}= \begin{cases}
      a+b+c & a \geq 2^i-l\\
      2a+c+k+l  & 2^i-k-l\leq a<2^i-l; l\leq2^{i-1}\\
      \geq\nimber{(a,k,l)} & l>2^{i-1};\nimber{(a,k,l)}\geq 2^i\\
      \nimber{(a,k,l)} &  \nimber{{(a,k,l)}<2^i}
   \end{cases}
\]

\begin{proof}
The first and last two cases are covered by \Cref{lemma:a0bothPowerOfTwo} and  \Cref{lemma:addingPowerOfTwo} respectively, so it suffices to show $\nimber{(a,b,c)}=2a+b+c-(2^i-l)$ whenever $0^i-k-l<a<2^i-l$ and $l\leq 2^{i-1}$.  This can be done via induction on $k+l$: for the base case when $k=0$ see \Cref{thm:conjectureone}.  Otherwise, suppose $k+l>0$ and that the result holds for all previous examples.  In general, we can cover all values in the range $[2^i-1,2^{i+1}-1]$ by bit arguments alone.  For $a=2^i-k-l$, values in the range $[2^{i+1},3*2^{i}-k-l-1]$ can be reached by moving to the positions $(a',2^i-l-1,2^i+l)$ for $0<a'\leq a$ as $2^i-l-1 \ns c=2^i-l-1+c$.  Finally, values in the range $[2^{i+1}+l,3*2^{i}-k-1]$ can be reached by moving to the position $(a',2^i-1,2^i+l)$ for $1\leq a'\leq a$.

To complete this case we need only show that there is no valid move to a position with nimber $3*2^i-k$.  But this is clear: as $k<l\leq 2^{i-1}$ and $a=2^i-k-l$ by \Cref{lemma:addingPowerOfTwo} we cannot reach this nimber by a reduction in $a$ only, and we cannot achieve this value by a reduction in $b$ or $c$ by induction.  Therefore, the claim holds when $a=2^i-k-l$.  To see that the claim holds in the other cases as well, note that from the induction hypothesis we have that incrementing $a$ while reducing $b$ by $1$ fills in the nimber.  Similarly, while $a<2^i-l$ induction also gives us the necessary upper bound.
\end{proof}

While \Cref{thm:same-char} explicitly characterizes nimbers for larger values of $a$, if $\lfloor\log_2(k)\rfloor\neq \lfloor\log_2(l)\rfloor$ then for smaller $a$'s the theorem provides little information. Therefore, we move on to analyzing $\nimber{(a,b,c)}$ in the cases where $\lfloor\log_2(b)\rfloor\neq \lfloor\log_2(c)\rfloor$.

We begin with an instance where we can explicitly determine the values of the Sprague-Grundy function:

\begin{theorem}\label{thm:conjectureone}
Suppose $b=2^i$, $c=(2k+1)2^i+r$, and $a<2^i-r$.  Then $\nimber{(a,b,c)}$ is the $(a+1)^{st}$ gap in $b\ns c$.
\end{theorem}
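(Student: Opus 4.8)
The plan is to evaluate $\nimber(a,2^i,c)$ as a mex: I will show that the $(a+1)^{st}$ gap $n$ of $b\ns c$ is a lower bound and that no legal move produces a position of nimber exactly $n$, so that $n$ is the least value absent from the reachable set. The lower bound is free from \Cref{lemma:gapsarelowerbounds}: taking $n$ to be the $(a+1)^{st}$ gap gives $\nimber(a,2^i,c)\ge n$, and this is exactly the assertion that every value in $[0,n-1]$ occurs among the reachable positions (the non-gaps by \Cref{lemma:nongapsareattainable} after collapsing $a$ to $0$ in the same turn, and the first $a$ gaps by the construction inside \Cref{lemma:gapsarelowerbounds}). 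Everything therefore reduces to establishing $(a,2^i,c)\nrightarrow n$.

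First I would record the arithmetic. With $b=2^i$ and $c=(2k+1)2^i+r$, $0\le r<2^i$, one has $b\ns c=2k\cdot 2^i+r$, whose bit $i$ is $0$. A short count identifies the smallest $2^i-r$ gaps of $b\ns c$ as precisely the numbers $2k\cdot 2^i+m$ with $m$ a gap of $r$ inside the low $i$ bits; all of these lie in the window $[2k\cdot 2^i,(2k+1)2^i)$. Since $a<2^i-r$, the target is one of them, $n=2k\cdot 2^i+m_a$ with $m_a<2^i$ and bit $i$ of $n$ equal to $0$. I also record that the gaps of $r$ are strictly increasing with smallest value $r$, so $m_a\ge r+a$; this single inequality drives the magnitude comparisons below.

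Non-reachability is then proved by induction, lexicographically on $(c,a)$, after classifying the moves out of $(a,2^i,c)$. A move that only shrinks the auxiliary pile reaches $(a',2^i,c)$ with $a'<a$, whose nimber is the $(a'+1)^{st}$ gap, strictly below $n$, by the inner induction. A move shrinking $b=2^i$ to $b'<2^i$ cannot touch bit $i$ of $c$, so $b'\ns c\ge(2k+1)2^i$ and \Cref{thm:CrudeBounds} forces the resulting nimber to be at least $(2k+1)2^i>n$. The moves that shrink $c$ to some $c'<c$ (with $a'\le a$) are the heart of the matter; I split on $h:=\lfloor c'/2^{i+1}\rfloor$ and on bit $i$ of $c'$. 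When bit $i$ of $c'$ is $0$ we have $b\ns c'=2^i+c'$, so the two bounds of \Cref{thm:CrudeBounds} coincide and $\nimber(a',2^i,c')=a'+2^i+c'$ exactly; equating this to $n$ and reducing modulo $2^i$ gives $(2(k-h)-1)2^i=a'+(c'\bmod 2^i)-m_a$, and a short check on $h$ using $m_a\ge r+a\ge a$ shows the two sides never agree. When bit $i$ of $c'$ is $1$ and $h=k$ (forcing $c'=(2k+1)2^i+r'$ with $r'<r$), the reduced game is of the theorem's own form, so the induction on $c$ evaluates its nimber as $2k\cdot 2^i$ plus the $(a'+1)^{st}$ gap of $r'$; a monotonicity sublemma---that the $j^{th}$ gap of $r'$ is strictly below the $j^{th}$ gap of $r$ whenever $r'<r$---combined with $a'\le a$ keeps this strictly below $n$. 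The base case $c=2^i$ is $\nimber(a,2^i,2^i)=a$, which is the $(a+1)^{st}$ gap of $0$, by \Cref{thm:specialCase}.

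The main obstacle is the remaining family: reductions of $c$ that keep bit $i$ equal to $1$ but drop the high part, i.e.\ $c'=(2h+1)2^i+t$ with $h<k$. These reduced games are again of the theorem's form with multiplier $h$, but only when $a'<2^i-t$ does the gap formula apply and deliver a nimber in $[2h\cdot 2^i,(2h+1)2^i)$, safely below $n$. For $a'\ge 2^i-t$ the sub-game sits above its own small-$a$ regime and, unless $A(2^i,c')\le 2^i-t$ (which \Cref{lemma:a0UpperBound} supplies cleanly only when $h+1$ is a power of two), is governed neither by the gap formula nor by the all-sum formula of \Cref{thm:NimberIsAlwaysSum}. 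Pinning down these intermediate nimbers---or, more likely, strengthening the induction hypothesis to a statement about the entire reachable gap-set that is stable under such moves---is where I expect the real work to lie; every clean case above collapses to \Cref{thm:CrudeBounds} together with the inequality $m_a\ge r+a$.
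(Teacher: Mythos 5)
You follow the same strategy as the paper's proof: the lower bound comes for free from \Cref{lemma:gapsarelowerbounds}, and the content is showing the $(a+1)^{st}$ gap $n=2k\cdot 2^i+r+a$ is unreachable, via a nested induction and a case analysis on moves. Your treatment of the easy families (reductions of $a$ alone; reductions of $b$, where $b'\ns c\geq(2k+1)2^i$; reductions of $c$ clearing bit $i$, where the bounds of \Cref{thm:CrudeBounds} pinch; reductions of $c$ with $h=k$) is correct and in fact more explicit than the paper's. But the family you leave open --- $c'=(2h+1)2^i+t$ with $h<k$ in the regime $a'\geq 2^i-t$, where the inductive gap formula does not apply --- is a genuine gap, not a formality: when $h=k-1$ and $t>r$ the trivial upper bound $a'+2^i+c'$ exceeds $n$, so nothing you prove excludes these sub-positions from having nimber exactly $n$. (For what it is worth, the paper's own proof dismisses precisely this family in one sentence, ``any move of this form can only reduce the value of the $(a+1)^{st}$ gap,'' which tacitly applies the induction hypothesis to sub-positions that may violate the constraint $a'<2^i-t$; so you have correctly located the weak point, but a proposal that stops there has not proved the theorem.)

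The missing ingredient is already in the paper, and no strengthened induction hypothesis is needed (indeed, the gap formula is false for $a\geq 2^i-r$, so it cannot be extended to all $a$). Apply \Cref{lemma:NumberOfGapsIsAnUpperBound} to the pair $(2^i,c')$: its hypothesis holds because the only $1$ bit of $b=2^i$ is bit $i$, which $c'$ also has, and for $h\geq 1$ the leading bit of $c'$ is strictly above $i$ (the case $h=0$ is \Cref{lemma:a0bothPowerOfTwo}, giving $A(2^i,2^i+t)=2^i-t$). Since $b+\sim b=2^{i+1}-1$, we get $c'\,|\,(b+\sim b)=(2h+2)2^i-1$, and by exactly your gap count the gaps of $b\ns c'=2h\cdot 2^i+t$ below this value are the $2^i-t$ consecutive values $2h\cdot2^i+m$ with $t\leq m<2^i$; hence $A(2^i,c')\leq 2^i-t$. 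So in your problematic regime $a'\geq 2^i-t$, \Cref{thm:NimberIsAlwaysSum} gives $\nimber(a',2^i,c')=a'+2^i+c'=(2h+2)2^i+(a'+t)$ with $2^i\leq a'+t<2^{i+1}$, placing this nimber in the block $[(2h+3)2^i,(2h+4)2^i)$; since $n$ lies in $[2k\cdot2^i,(2k+1)2^i)$ and $2h+3$ is odd while $2k$ is even, the two aligned blocks are disjoint, so the nimber is never $n$. Combined with your bound $\nimber(a',2^i,c')=2h\cdot2^i+t+a'<(2h+1)2^i\leq(2k-1)2^i<n$ in the regime $a'<2^i-t$, this closes the gap and completes the proof along exactly the route you set up.
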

\begin{proof}
This is done via a nested induction on $a,r,$ and $k$.

For the base, suppose $a=r=k=0$.  Then $\nimber{(a,b,c)}=0$, the $1^{st}$ gap.
Now suppose $0<a<A(b,c)$, $r=k=0$ and assume the claim holds for all smaller values of a.  Then $b=c=2^i$ and we can reach all values less than the $(a+1)^{st}$ gap by either bit arguments or \cref{lemma:gapsarelowerbounds}.  Therefore it suffices to show that there is no move to a position with nimber $a$.  But this is clear: this value cannot be obtained by a reduction in $a$ only (by induction) and any reduction in $b$ to $b'$ (or equivalently $c$) results in a position with nimber at least $b'\ns c+a\geq2^i>a$.

Next, suppose $0<r<2^i$, $0<a<2^i-r$, $k=0$, and the claim holds for all previous values of $a$ and $r$.  Similarly to above, it suffices to show there is no move to a position with the nimber of the $(a+1)^{st}$ gap in $b\ns c$ (in this case the value is just $a+(b\ns c)=a+r<2^i$).  As above, reducing only a, c by more than r, or b at all cannot possibly result in this value (by induction in the first case and the lower bound in the latter two).  Similarly, reducing $c$ by less than $r$ results in some $r'$ in a position with nimber at most $a+r'<a+r$, so there is no valid move to the $(a+1)^{st}$ gap.

Finally, suppose that $a,k,r>0$.  The only additional case to check in this instance are moves that reduce $c$ by more than $2^i$.  However, as any move of this form can only reduce the value of the $(a+1)^{st}$ gap we are done by induction.\end{proof}

Unfortunately, when neither $b$ nor $c$ are a power of two the function's behavior is in general far worse.  While we cannot explicitly characterize the SG function in any more general cases, we can show that when $c$ is sufficiently larger than $b$ order starts to reappear, even for small values of $a$.  We prove this for $b$ odd in the next theorem, but first a lemma:

\begin{lemma} \label{lemma:trailingones}
Let $n>0$ and suppose $b=(2^{i}-1)+n2^i$ and $n2^i\ns c=n2^i+c$ with $i>0$.  Then $A(b,c)\leq 1$.
\end{lemma}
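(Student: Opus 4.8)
The plan is to show $\nimber(1,b,c)=1+b+c$; then by \Cref{thm:NimberIsAlwaysSum}, together with the upper bound of \Cref{thm:CrudeBounds}, it follows that $A(b,c)\le 1$. Since that upper bound already gives $\nimber(1,b,c)\le 1+b+c$, it suffices to prove $(1,b,c)\rightarrow n$ for every $n\in[0,b+c]$. Before doing so I would fix notation: write $c=c_H+c_L$, where $c_L=c\bmod 2^i<2^i$ is the low part and $c_H$ is a multiple of $2^i$. The hypothesis $n2^i\ns c=n2^i+c$ says that $c$ is bit-disjoint from $n2^i$, and since $n2^i$ has no bits below index $i$ this amounts to $c_H$ being bit-disjoint from $n2^i$. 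Set $H=n2^i\ns c_H=n2^i+c_H$, a multiple of $2^i$, which is precisely the high part of $b\ns c$. The whole argument is an induction on $c$ (equivalently on $c_L$), with $b$ (and hence $n,i$) fixed.

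The core is to split $[0,b+c]$ into three ranges and reach each. For the low range $[0,(b\ns c)-1]$, every gap other than $b\ns c$ exceeds $b\ns c$, so all of these values are non-gaps; by \Cref{lemma:nongapsareattainable} each is reachable from $(0,b,c)$, and replicating that move while simultaneously dropping the auxiliary pile from $1$ to $0$ shows $(1,b,c)\rightarrow n$ for all such $n$. For the block $[H,H+2^i-1]$ I would exploit the trailing ones: reducing $b=n2^i+(2^i-1)$ only within its low $i$ bits reaches $n2^i+s$ for any $s\in[0,2^i-2]$, and dropping $a$ to $0$ in the same turn lands in $(0,n2^i+s,c)$ with nimber $(n2^i+s)\ns c=H+(s\ns c_L)$. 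Together with the pure $a$-drop (which gives $b\ns c=H+((2^i-1)\ns c_L)$, supplying the $s=2^i-1$ case), and using that $s\mapsto s\ns c_L$ is a bijection of $[0,2^i-1]$, these cover all of $[H,H+2^i-1]$. Since $H\le b\ns c\le H+2^i-1$, the low range and the block together cover $[0,H+2^i-1]$.

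For the top range $[H+2^i,b+c]$, observe that $b+c=H+(2^i-1)+c_L$, so this range equals $\{1+b+c_H+t : t\in[0,c_L-1]\}$. For each $c'=c_H+t$ with $t<c_L$ we have $c'<c$ and $c'$ is still bit-disjoint from $n2^i$, so the induction hypothesis applies and gives $\nimber(1,b,c')=1+b+c'$; reducing only the $c$-pile from $c$ to $c'$ while keeping $a=1$ therefore reaches nimber $1+b+c'=1+b+c_H+t$, sweeping out the entire top range. The base case $c_L=0$ makes this range empty, and the previous two ranges already cover $[0,b+c]$. Once every value in $[0,b+c]$ is reached, the mex is $1+b+c$, which matches the upper bound, so $\nimber(1,b,c)=1+b+c$ and $A(b,c)\le 1$.

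I expect the main obstacle to be the top range: one must verify that decrementing $c$ within its low part preserves the disjointness hypothesis so the induction genuinely applies, and that the resulting nimbers tile $[H+2^i,b+c]$ with no omissions. The other delicate bookkeeping is in the block argument, where the bijection $s\mapsto s\ns c_L$ must be reconciled with the constraint $s\le 2^i-2$ required for an actual reduction of $b$, the missing endpoint $s=2^i-1$ being recovered from the pure $a$-drop. The roles of $i>0$ (so the trailing-ones block is nonempty and the block has size $2^i\ge 2$) and $n>0$ (so $H\ge 2^i$) should be checked to ensure the three ranges fit together as claimed.
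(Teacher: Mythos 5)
Your proof is correct and takes essentially the same route as the paper's: both arguments induct on the low part $c_L = c \bmod 2^i$ of $c$, cover everything below $b\ns c$ by dropping the auxiliary pile and replicating moves from $(0,b,c)$ (the paper phrases this via $\nimber(0,b,c)=b+c-2c_L$, you via Lemma \ref{lemma:nongapsareattainable}, which is equivalent), and cover the top range $[H+2^i,\,b+c]$ by applying the induction hypothesis to $c'=c_H+t$ with the auxiliary pile kept at $1$. The only (harmless) divergence is the middle range: you reach $[H,H+2^i-1]$ by reducing $b$'s trailing-ones block with a simultaneous $a$-drop, using the bijection $s\mapsto s\ns c_L$, whereas the paper reduces $c$'s low part to get nimbers $b\ns(c-j)=b+c-2c_L+j$ over $[b+c-2c_L,\,b+c-c_L]$ --- two equivalent one-step bit arguments covering the same gap.
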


\begin{proof}
Suppose we have a $b$ and $c$ of the desired form and express $c$ uniquelly as $c=m2^i+k$, where $m\geq 0$ and $k<2^i$.  The proof is an induction on $i$ and $k$.

If $i=1$ then $k\in\{0,1\}$.  As $A(b,c)=0$ in the first case for all values of $i$ (taking care of the base cases for each value of $i$) suppose $k=1$.  Then $\nimber{(0,b,c)}=b+c-2$, $\nimber{(0,b-1,c)}=b+c-1$, and $\nimber{(1,b-1,c)}=b+c$ and we are done.

Now suppose the claim holds for all previous values of $i$ and $k$.  Similar to above, we have that $\nimber{(0,b,c)}=b+c-2k$, so it suffices to show that $(1,b,c)\rightarrow x$ for all $x\in [b+c-2k,b+c]$.  If $x\in [b+c-2k,b+c-k]$ then $(0,b,c)\rightarrow x$ by reducing $c$ by some appropriate value less than $k$.  If $x\in [b+c-k,b+c]$ then $(1,b,c)\rightarrow x$ by the I.H. as $\nimber{(1,b,c-k+r)}=b+c-k+r$ for $0\leq r<k$.
\end{proof}
We are now ready to prove Theorem \ref{thm:oddb}\vspace{2mm}.\\
\textbf{Theorem 4. }For $b$ odd, if $c\geq 2^{2\lfloor \log_2 b \rfloor+1}-2^{\lfloor \log_2 b \rfloor+2}-1$ then $\nimber{(1,b,c)}=1+b+c$.

\begin{proof}
We begin by showing this in the case were all of the gaps less than $b$ in $b\ns c$ are consecutive and then showing that the results carry over.

Let $b=2^i+2^j-1$ where $i>j>1$.  From \Cref{lemma:trailingones} we already have that if $c$ does not have a $1$ in its $i$ bit then $\nimber{(1,b,c)}=1+b+c$.  Now consider the sequence of $c$'s where $c$ does have a $1$ in its $i$ bit.  The first such run of $c$'s is $c\in [2^i,2^{i+1}-1]$ and \Cref{thm:same-char} already characterizes these: $\nimber{(1,b,2^i+k)}=k+2^j$ for $k\in [0,2^i-2^j]$ and $b+2^i<\nimber{(1,b,2^i+k)}$ for $k\in [2^i-2^j+1,2^i-1]$.  We use this as the base of an induction showing that for $n=2m+1$ with $m\geq 0$ and $c\in [n2^i,(n+1)2^{i}-1]$ then there are at least $m+1$ values of $c$ for which $b+n2^i<\nimber{(1,b,n2^i+k)}$.  In fact, we claim something slightly stronger: after the $n=1$ case, the $\nimber{(1,b,n2^i+k)}$ 'counts up' along values starting from $(n-2)2^i+1+b$, skipping over at least the $m$ values of $\nimber{(1,b,c)}$ found in the last stage of the induction.

To make this clearer, for each $n>1$ as defined before, as all values in $[(n-3)2^i+b,(n-2)2^i+b]$ can be covered via a reduction in $c$ (from \cref{lemma:trailingones}) it is the case for all appropriate values of $k$ that $b+(n-2)2^i<\nimber{(1,b,n2^i+k)}$.  Now, if there were no reductions in $b$ that could result in a position with nimber $N$ such that $b+(n-2)2^i<N<n2^i$ then as $k$ increases $\nimber{(1,b,n2^i+k)}$ would count up by $1$ for each increase in $k$ but skipping over the $x\geq m$ values found in the last iteration of the induction.  This would happen until the nimber counts up to $n2^i-1$, after which point there are no more gaps in $b\ns c$ less than $(n+1)2^i$.  Further, as all values in the range $[(n-1)2^i+b,n2^i+b]$ can be covered by \Cref{lemma:trailingones}, once the nimbers have counted up to $n2^i-1$ the remaining values will all be greater than $n2^i+b$.  Therefore, as at most $(x-1)$ values were skipped in the last iteration of the induction, leading to $x$ values in the sequence such that $b+n2^i<\nimber{(1,b,n2^i+k)}$, skipping over $x$ values in the count produces at least $x+1$ of the desired values in this iteration.

Note that after at most $2^{i+1}*(2^i-2)$ iterations (in which case $c\geq 2^{2\lfloor \log_2 b \rfloor+1}-2^{\lfloor \log_2 b \rfloor+2}-1$) it is the case that for all greater values of $c$ we have $b+n2^i<\nimber{(1,b,c)}$.  We claim that at this point $\nimber{(1,b,c)}=1+b+c$.  We already had that values in the range $[(n+1)2^i,(n+2)2^i-1]$ for some $n$ achieved the upper bound by \Cref{lemma:trailingones}.  For $c\in[(n)2^i,(n+1)2^i-1]$ for large enough $n$, consider the first value: $c=n2^i$.  In this case the condition that $b+n2^i=b+c<\nimber{(1,b,n2^i)}$ already tells us that $\nimber{(1,b,c)}=1+b+c$.  This in turn inductively tells us that all values of $c$ in this range reach the maximum.

Now, we must deal with the possibility of reductions in $b$ that lead to positions such that $b+(n-2)2^i<\nimber{(a',b,c)}<n2^i$.  To show that such moves cannot lead to issues, consider the first position $c'$ in this iteration of the induction where the nimber differs from the count described in the previous paragraph.  As all values in the range $[(n-1)2^i+b,n2^i+b]$ can still be covered by a reduction in $c$, there are two cases: either $b+(n-2)2^i<\nimber{(1,b,c')}<n2^i$ or $n2^i+b<\nimber{(1,b,c')}$.  In the first case the count is potentially set back by at most $1$ temporarily, but skips the value of $\nimber{(1,b,c')}$ later in the count for no net change.  Similarly, in the latter case although the count can potentially be set back by $1$ for its entire duration, $\nimber{(1,b,c')}$ becomes one of the $m$ values needed for the induction to work.  As this is the case whenever a position differs from what is predicted by the count no problems arise.

Finally, suppose that not all gaps of $b \ns c$ are consecutive.  Then $b=2^j-1+(2n+1)2^{i}$ for some $i>j+1>0$ and note that applying the procedure from before on $2^i+2^j-1$ shows that for large enough $c$ $\nimber{(1,2^i+2^j-1,c)}=1+2^i+2^j-1+c$.  As none of the arguments necessary to prove this are effected by the addition of leading $1$'s in $c$, this procedure can be applied inductively to each sub-component of $b$ (based on the number of leading ones in $b$) to show the result in general.
\end{proof}

For $b$ even, while a similar analysis can provide periodicity results in the $a=1$ case, doing so is far more dependent on the initial conditions of the induction.  This is due to the following lemma, which ensures that $\nimber{(1,b,c)}\neq 1+b+c$ for values when $c$ is also even and $b \ns c \neq b+c$, and thus complicates the recursive structure of $(1,b,c)$.

\begin{lemma} \label{lemma:evens}
If $b$ and $c$ are both even, then $A(b,c)\neq1$
\end{lemma}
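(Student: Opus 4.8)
The plan is to argue by contradiction using a strong induction on $b+c$. If $A(b,c)=1$ then, since $A(b,c)$ is the least $a$ with $\nimber(a,b,c)=a+b+c$ and $\nimber(0,b,c)=b\ns c$, we must have $b\ns c\neq b+c$; as $b,c$ are even their common set bit has index at least $1$, so $b+c-(b\ns c)\geq 4$. I will contradict $\nimber(1,b,c)=1+b+c$ by exhibiting a value $\le b+c$ that $(1,b,c)$ cannot reach. The natural candidate is $t:=(b\ns c)+1$: since $b,c$ are even, $b\ns c$ has a $0$ in bit $0$ with $b_0=c_0=0$, so $t$ is exactly the second gap of $b\ns c$ (the first gap being $b\ns c$ itself by the definition of a gap), and $t\le b+c-3<1+b+c$. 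Showing $(1,b,c)\nrightarrow t$ forces $\nimber(1,b,c)\le t<1+b+c$, the desired contradiction. In fact I would prove the cleaner statement $\nimber(1,b,c)=t$ for all even $b,c$: all of $[0,b\ns c]$ is reachable from $(1,b,c)$, since every value below $b\ns c$ is a non-gap (no gap is smaller than the first gap) and hence reachable by \Cref{lemma:nongapsareattainable} after deleting the auxiliary pile, while $b\ns c$ itself is the nimber of $(0,b,c)$, reached by deleting the auxiliary pile outright. It then remains only to show $t$ is unreachable.

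The moves from $(1,b,c)$ are: delete the auxiliary pile, reaching $(0,b,c)$ of nimber $b\ns c\ne t$; or reduce one of $b,c$ with or without deleting the auxiliary pile. The ``bit-argument'' moves that delete the auxiliary pile while reducing a heap land in $(0,b',c)$ or $(0,b,c')$ with nimbers $b'\ns c$, $b\ns c'$; since $t=(b\ns c)\ns 1$, hitting $t$ would force $b'=b\ns 1=b+1$ or $c'=c+1$, an illegal increase. So $t$ can only be reached through a move that keeps the auxiliary pile, i.e. to $(1,b',c)$ or $(1,b,c')$, and these I must rule out. If a reduction strictly increases the Nim-xor, say $b'\ns c>b\ns c$, then the lower bound of \Cref{thm:CrudeBounds} gives $\nimber(1,b',c)\ge 1+(b'\ns c)>t$. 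If instead an even heap is reduced to an even value with $b'\ns c<b\ns c$, the inductive hypothesis (the even--even formula) gives $\nimber(1,b',c)=(b'\ns c)+1\le b\ns c<t$. In both cases the value differs from $t$ (note $b'\ns c=b\ns c$ is impossible since $b'\ne b$). The genuinely delicate moves are reductions of the even heap $b$ or $c$ to an \emph{odd} value, which produce a mixed-parity position and are not governed by either bound above.

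The crux is therefore a parity lemma: for all $b,c$ one has $\nimber(1,b,c)\equiv 1+b+c \pmod 2$. Granting it, any mixed-parity child $(1,b',c)$ (one heap odd, one even) has even nimber, whereas $t$ is odd, so these moves cannot reach $t$ either; this closes the induction and proves the lemma. I expect proving the parity lemma to be the main obstacle, since the parity of a $\mathrm{mex}$ is \emph{not} determined by the parities of the children's values, so a bare induction on $b+c$ does not close. To attack it I would again use the gap machinery: as all of $[0,b\ns c]$ is attainable, $\nimber(1,b,c)$ is the smallest gap of $b\ns c$ exceeding $b\ns c$ that is missed by every auxiliary-preserving move, so I would track the parities of the successive gaps (which are governed by the free positions where $b_i=c_i=0$) together with \Cref{lemma:gapsarelowerbounds}, which forces the lower gaps to be hit recursively. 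Pushing this parity bookkeeping through the recursion is where the real difficulty lies.
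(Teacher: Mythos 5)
Your reduction of the problem is carried out carefully --- the identification of $t=(b\ns c)+1$ as the second gap, the observation that all of $[0,b\ns c]$ is reachable, the elimination of the auxiliary-deleting moves (hitting $t$ would force an illegal increase $b'=b\ns 1=b+1$), and the disposal of even-to-even heap reductions via \Cref{thm:CrudeBounds} and the inductive hypothesis are all correct. But the two statements you reduce everything to are both \emph{false}, so the gap you flag at the end is not merely ``the main obstacle'' --- it is unfillable. The parity lemma fails already at $(1,5,6)$: a direct computation (all subgames are small enough to do by hand) gives $\nimber(1,5,6)=11$, which is odd, while $1+b+c=12$ is even. Worse, your strengthened claim $\nimber(1,b,c)=(b\ns c)+1$ for even $b,c$ is itself false, and it fails exactly through the mixed-parity moves you could not control: take $(b,c)=(6,12)$, so $t=(6\ns 12)+1=11$. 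The move reducing $c$ from $12$ to $5$ lands on $(1,6,5)=(1,5,6)$ with nimber $11$, so $(1,6,12)\rightarrow t$ and $\nimber(1,6,12)\neq 11$. In other words, odd-heap children genuinely do hit your candidate unreachable value, so no parity bookkeeping can rescue the induction; the statement being inducted on must be abandoned, not just its last step.

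This is instructive when set against the paper's proof, which deliberately avoids committing to any exact formula for $\nimber(1,b,c)$. Instead it argues locally near the top of the value range: in the case where $b$ and $c$ intersect only in their rightmost filled bit $2^x$, it exhibits the single unreachable value $b+c-2^x-1$ (using \Cref{lemma:trailingones} to show reductions by $2^x-1$ overshoot and the inductive hypothesis to show reductions by $2^x-2$ undershoot); in the remaining case it never computes $\nimber(1,b,c)$ at all, but shows that reaching both $b+c$ and $b+c-1$ would force contradictory values of $A(b-1,c)$, $A(b,c-1)$, $A(b-2,c)$, $A(b,c-2)$, $A(b-1,c-1)$. The induction there runs on the coarse quantities $A(\cdot,\cdot)$, which are robust under the chaotic behavior of the nimbers themselves --- precisely the behavior that, as your $(1,6,12)$ example shows, destroys any attempt at a clean closed form in the even--even case.
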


\begin{proof}
Suppose $b=2^i+2m$ and $c=2^{i+r}+2n$.  The proof is again via nested induction:

From Theorems 2 and 6, if any of $m$, $n$ or $r$ are $0$ then either $A(b,c)=0$ or $A(b,c)\geq 2$ as desired.  This covers the base case for each part of the induction.

Now suppose $b=2^i+2m$ and $c=2^{i+r}+2n$ where $m,n,r>0$ and the claim holds for all previous values $m,n,r$.  If $b\ns c=b+c$ then we are done.  If not, there are two cases: either the bit representation of $b$ and $c$ intersect only in their rightmost filled bit or not.

If we are in the first case, let $x$ be the index of the rightmost filled bit of $b$ and $c$.  Then $b\ns c=b+c-2^{x+1}$ and $(1,b,c)\nrightarrow b+c-2^x-1$.  This is because the trivial upper and lower bounds give that this value can only possibly be achieved by a reduction in $b$ or $c$ by either $2^x-1$ or $2^x-2$.  However, in the first case \cref{lemma:trailingones} gives us that the resulting nimber will be too large, and in the latter case the IH gives the resulting nimber will be too small.  Therefore, in this case $\nimber{(1,b,c)}\leq b+c-2^x-1$.

Now suppose we are in the second case.  Consider how $(1,b,c)\rightarrow b+c$ and $(1,b,c)\rightarrow b+c-1$.  To reach $b+c$, it must be the case that either $A(b-1,c)$ or $A(b,c-1)=1$, so WLOG assume $A(b-1,c)=1$.  Then as $b$ and $c$ overlap somewhere other than their rightmost filled bit it's the case that both $A(b-2,c)$ and $A(b,c-2)\neq0$.  Therefore, by the IH $(1,b,c)$ cannot reach $b+c-1$ by a reduction in $b$ or $c$ by two.  Therefore, unless $\nimber{(1,b,c-1)}=b+c-1$ the claim holds.  However, under these circumstances in order for $A(b-1,c)=1$ it must be the case that $A(b-1,c-1)=1$.  But then it's impossible for $\nimber{(1,b,c-1)}=b+c-1$ and the proof is complete.
\end{proof}

Therefore, while we can prove periodicity results for $b$ even and $a=1$ in several cases, there are enough exceptions to the general rule that we cannot do so in general.  However, for $a=2$ a similar analysis to Theorem $\ref{thm:oddb}$ should show that $\nimber{(2,b,c)}=1+b+c$ for all large $c$.

\section{Discussion}
\subsection{Further Directions with Auxiliary Nim}
To recap, at this point we have characterized the Sprague-Grundy function of $(a,b,c)$ whenever: (1) $a$ is sufficiently large; (2) $\lfloor log_2(b)\rfloor=
\lfloor log_2(c)\rfloor$, or (3) $c>>b$. In some cases we have also extended these results to general auxiliary-nim games.

One potential line of further work is doing a more detailed analysis of the remaining cases: can we give a closed form expression for $\nimber{(a,b,c)}$?
\begin{question} Determine a non-recursive description of the behaviour of $\nimber(a,b,c)$.
\end{question}
Figure \ref{fig:fig} suggests that a closed-form solution, at least a simple one, is unlikely to emerge.\\[0.01in]
\par We have also not fully analyzed how the results regarding the $c>>b$ cases might generalize to the general Auxiliary Nim.
\begin{question}Characterize $\nimber(x_1,x_2,\cdots,x_n)$ when $x_n$ is ``sufficiently large''.\end{question}

Perhaps more interestingly, however, more general ``auxiliary'' games could be analyzed.  What can we say about the game $(*k)\ws{A}$, where $A$ is an arbitrary impartial combinatorial game?

\begin{question}Characterize the games $A$ where $\exists k_0\in \mathbb{N}$  such that $\forall k>k_0$, $\nimber{((*k)\ws A)}=k+depth(A)$.
\end{question}

We already know that Nim has this property.  Do more exotic games?\\[0.05in]

\par Using the notation presented in \cite{tetrishypergraphs} we note that $n$ heap auxiliary-nim is the game $NIM_\mathcal{H}$ where $\mathcal{H}=\{\{1\},...,\{n\}, \{1,2\},\{1,3\},...,\{1,n\}\}$.  Here, the game $NIM_\mathcal{H}$ is played on $|V(\mathcal{H})|$ heaps were a valid move is selecting a hyperedge in $\mathcal{H}$ and making reductions in all non-empty heaps within that edge.  Are there more general hypergraphs $\mathcal{H}$ where $NIM_{\mathcal{H}}$ behaves similarly to Auxiliary Nim?
\begin{question} Do results presented here extend to more general hypergraph games?\end{question}

\subsection{Periodicity}

We do know that not all games $A$ satisfy the property mentioned in Question $3$. For example, consider games of the following form:

\begin{definition}
A general subtraction game is a sequence of games $G_n$ such that the set of positions that $G_n$ can move to is $\{G_m \mid m \in g(n)\}$ where $g: \mathbb{N} \rightarrow 2^{\mathbb{N}}$ is such that $\forall \: n \in \mathbb{N}$, $g(n) \subseteq [n-1]$. We call $g$ the function associated with $G_n$
\end{definition}

\begin{definition}
A finite fixed set subtraction game is a subtraction game $G_n$ such that there exists a set $S \subseteq N$ for some $N\in \mathbb{N}$ such that the function $g$ associated with $G_n$ satisfies $g(n) = \{n-x \mid x \leq n \wedge x \in S\}$. We call $S$ the set of $G_n$.
\end{definition}

It is not hard to prove that the Sprague-Grundy values for $*k\ws{G_n}$ is periodic with respect to $n$ if $G_n$ is a finite subtraction game, although the upper bound on the length of the period is exponential. Note that periodicity immediately tells us that the property mentioned in Question $3$ cannot hold.

\begin{theorem}
If $G_n$ is a finite subtraction game, then the Sprague-Grundy function of $G_n \ws *k$ is periodic for any $k \in \mathbb{N}$.
\end{theorem}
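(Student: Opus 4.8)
The plan is to recast the recursion for $\nimber(G_n \ws *k)$ as the orbit of a single deterministic map on a \emph{finite} state space, and then invoke the pigeonhole principle. Fix $k$, and let $S$ be the finite subtraction set of $G_n$ with $N=\max S$. For $0\le k'\le k$ write $f_{k'}(n)=\nimber(G_n \ws *k')$. Since a legal move in the selective compound selects $G_n$, $*k'$, or both and then moves in each chosen component, unwinding the $\mathrm{mex}$ definition of $\nimber$ gives, for $n\ge N$,
\[
  f_k(n)=\mathrm{mex}\Big(\{f_k(n-x):x\in S\}\cup\{f_{k'}(n):k'<k\}\cup\{f_{k'}(n-x):x\in S,\ k'<k\}\Big).
\]
The reason for carrying all of $f_0,\dots,f_k$ at once is that this recursion couples a column to \emph{lower} entries of the same column (the ``move in $*k$ only'' options) as well as to earlier columns. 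Grouping the values into the column vector $C(n)=(f_0(n),\dots,f_k(n))$ and computing its entries in increasing order of $k'$ makes $C(n)$ a well-defined function of the earlier columns $\{C(n-x):x\in S\}$ alone.

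The key step, and the reason finiteness is available at all, is a uniform bound on the Sprague--Grundy values. The out-degree of $G_n \ws *k'$ is at most $|S|+k'+|S|\,k'=(|S|+1)(k'+1)-1$ (moves in $G_n$ only, in $*k'$ only, and in both), \emph{independently of $n$}, so $\mathrm{mex}$ forces $f_{k'}(n)<(|S|+1)(k+1)=:B$ for every $n$. Hence each $C(n)$ lies in the finite set $\{0,\dots,B-1\}^{k+1}$.

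With the bound in hand, define the window state $\mathbf{W}(n)=\big(C(n-N+1),\dots,C(n)\big)$, an element of the finite set $\Omega=\big(\{0,\dots,B-1\}^{k+1}\big)^{N}$. Because for $m\ge N$ the move set $g(m)=\{m-x:x\in S\}$ is an exact translate of $S$ with no truncation, the recursion above is translation invariant, so there is a single map $T\colon\Omega\to\Omega$ with $\mathbf{W}(n+1)=T(\mathbf{W}(n))$ for all $n$ large enough: $T$ reads off the columns of $\mathbf{W}(n)$ indexed by $S$, applies the fixed update to produce $C(n+1)$, and shifts. Iterating a fixed map on a finite set, the orbit $(\mathbf{W}(n))_n$ is eventually periodic, so there are $n_0$ and $p\ge1$ with $\mathbf{W}(n+p)=\mathbf{W}(n)$ for all $n\ge n_0$. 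Reading off the last coordinate gives $f_k(n+p)=f_k(n)$ for $n\ge n_0$; that is, $\nimber(G_n \ws *k)$ is ultimately periodic in $n$, with period at most $|\Omega|=B^{(k+1)N}$, matching the claimed exponential bound.

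The main obstacle is not any single calculation but getting the bookkeeping right so that the recursion becomes genuinely autonomous: one must (i) establish the $n$-independent out-degree bound that keeps the alphabet finite, and (ii) verify that $C(n)$ is a function of the previous columns through a rule that does not depend on $n$ once $n\ge N$, which is exactly where the finiteness of $S$ and the shift-invariance of $g$ enter. The small-$n$ boundary positions, where $g(n)$ is truncated, affect only the preperiod and not the period, so they are harmlessly absorbed into $n_0$.
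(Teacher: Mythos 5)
Your proposal is correct and is essentially the paper's own argument: both establish an $n$-independent bound on the nimbers by counting out-degree, observe that the column $(\nimber(G_n\ws *0),\dots,\nimber(G_n\ws *k))$ is determined by the columns at $n-x$ for $x\in S$ (computing entries in increasing order of the second heap, exactly as the paper handles the ``$x=0$'' options), and then apply pigeonhole to windows of $\max(S)$ consecutive columns to force a repetition that propagates forward. Your packaging of the pigeonhole step as the eventually periodic orbit of a deterministic self-map on a finite state space is only a cosmetic reformulation, and your period bound $B^{(k+1)N}$ matches the paper's $((k+1)m)^{(k+1)m}$-type exponential bound.
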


\begin{proof}
Let $G_n$ be a finite subtraction game with set $S$, and let $m = \text{max}(S)+1$. Since any position in $G_n \ws *k$ has at most $m$ choices for which move to make in the left game (note that $m$ is larger now because we include the possibility of not moving in the left game), and at most $k+1$ choices for which move to make in the right game, the total number of moves possible from $G_n \ws *k$ is at most $(k+1)m$, and thust $\nimber{(G_n \ws *k)} \leq (k+1)m$ (so the nimbers are bounded).

Note also that the nimber of $G_n \ws *k$ is completely determined by the nimbers of $G_{n-x} \ws *(k-y)$ where $0 < x \leq m$ and $0 \leq y \leq k$. Note that we need not consider $x = 0$, because in fact the nimbers for the positions of this form where $x = 0$ are completely determined by the rest. That is, the $\nimber{(G_n \ws *0)}$ is completely determined by $\{\nimber{(G_{n-x} \ws *0)}\}$, and thus $\nimber{(G_n \ws *1)}$ is completely determined by $\{\nimber{(G_{n-x} \ws *0)}\} \cup \{\nimber{(G_{n-x} \ws *1)}\} $, and so on.

Thus, if we have that for some $a,b \in \mathbb{N}$, and for every $0 < x \leq m$ and $0 \leq y \leq k$, $\nimber{(G_{a-x} \ws *(k-y))} = \nimber{(G_{b-x} \ws *(k-y)))}$, then we must also have that for every $0 \leq y \leq k$, $\nimber{(G_a \ws *(k-y))} = \nimber{(G_b \ws *(k-y))}$. Thus, if such an $a$ and $b$ exist with $a \neq b$ we have, by induction, that $G_n \ws *k$ is periodic with period at most $|b-a|$.

To see that such an $a$ and $b$ must exist, we simply note that since the nimbers are bounded by $(k+1)m$, and the number of choices for $x$ and $y$ is only $(k+1)m$, there are only $((k+1)m)^{(k+1)m}$ possibilities for the nimbers of the positions for the form $G_{n-x} \ws *(k-y)$, so by PHP, there must exist $0 \leq a < b < m + ((k+1)m)^{(k+1)m}$ such that for every $0 < x \leq m$ and $0 \leq y \leq k$, $\nimber{(G_{a-x} \ws *(k-y))} = \nimber{(G_{b-x} \ws *(k-y))}$, and thus, by the above observations, $G_n \ws *k$ is periodic.
\end{proof}

It's not hard to construct artificial sequences of games $A_n$ such that $A_n$ is periodic, but $*1\ws A_n$ is not. However, it appears as though if the sequence is constructed with certain structural regularities, such as the case of finite subtraction games, periodicity seems to be preserved.  Therefore, we have another interesting question at hand.

\begin{question} For which sequences of games $A_n$ is $\nimber{(*k\ws{A_n})}$ periodic with respect to $n$ for any $k\in \mathbb{N}$?
\end{question}
\par For instance, consider the game $GRAPH_G$ played on a simple graph $G$: on each turn, the players select a vertex, and remove a positive integer many edges incident on that vertex. Terminal positions are edgeless graphs. When this game is played on a path graph, it is isomorphic to a game of $Kayles$ \cite{siegel}. $KAYLES_n$ (or $GRAPH_{P_n}$ where $P_n$ is a path of edge-length $n$) is known to be periodic with a period of $12$. The proof of this fact is data-driven: there exists a threshold value of $N$ such that when $KAYLES_n$ is verified computationally to be periodic up to the threshold value, then we can deduce that it will remain periodic forever. This threshold argument works for a large class of games.
\begin{definition}
An octal game is a game played with tokens divided into heaps, where valid moves are one of the following:
\begin{itemize}
    \item Remove some (possibly all) of the tokens in one heap
    \item Remove some (not all) of the tokens in a heap, and divide the rest into two non-empty heaps.
\end{itemize}
\end{definition}
Observe that normal single-heap Nim is an octal-game, but not periodic. The following theorem formalizes the threshold argument for most octal games. Call $G_n$ (starting configuration is single heap with $n$ tokens) a bounded octal game if the number of tokens that can be removed from any single heap is bounded.
\begin{theorem}\label{periodicity}
Let $G_n$ be a bounded octal game with bound $k\in\mathbb{N}$. Suppose that $\exists\, n_0,p\geq 1$ such that $\nimber(G_n)=\nimber(G_n+p)$ for all $n$ satisfying $n_0\leq n\leq 2n_0+p+k$. Then, $G_n$ is periodic.
\end{theorem}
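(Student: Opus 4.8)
The plan is to prove the stronger statement that $\nimber(G_{n+p}) = \nimber(G_n)$ for \emph{all} $n \ge n_0$ (which is what ``periodic'' should mean here), treating the given hypothesis as the assertion that this holds on the initial window $n_0 \le n \le 2n_0 + p + k$, and then bootstrapping by strong induction on $n$. Write $g(m) = \nimber(G_m)$ for the Sprague--Grundy sequence of single heaps. Since the nim-value of a multi-heap position is the nim-sum of its heaps, $g(m)$ is the $\mathrm{mex}$ of the set of values reachable in one move from a single heap of size $m$. By the octal rules together with the bound $k$, every such option is either a single heap $g(m-i)$ with $1\le i\le k$ (a pure removal), or a nim-sum $g(a)\oplus g(b)$ with $a+b = m-i$, $a,b\ge 1$ and $0\le i\le k$ (a split). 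To establish $g(n)=g(n+p)$ it therefore suffices to show that the option sets of $n$ and $n+p$ realize exactly the same set of Sprague--Grundy values, since equal value-sets have equal $\mathrm{mex}$.

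For the removal options the correspondence is immediate: for each $1\le i\le k$ we have $n-i \ge n-k > n_0$ and $n-i<n$, so the induction hypothesis yields $g(n+p-i)=g(n-i)$, pairing these options up. The substance of the proof is the splitting options, and this is exactly where the threshold $2n_0+p+k$ is consumed. Given a split of $n+p$ into $(a,b)$ with $a\le b$, the larger heap satisfies $b \ge (n+p-k)/2 > n_0+p$ because $n > 2n_0+p+k$; hence $b-p>n_0$ and $b-p<n$, so the induction hypothesis gives $g(b)=g(b-p)$, and $(a,b-p)$ is a legal split of $n$ removing the same number of tokens, with identical nim-value $g(a)\oplus g(b)$. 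Conversely, given a split of $n$ into $(a',b')$ with $a'\le b'$, the larger heap satisfies $b'\ge (n-k)/2 > n_0$, so $g(b'+p)=g(b')$ and $(a',b'+p)$ is a legal split of $n+p$ with matching value. Thus the two sets of split-values coincide, and combined with the removal case the full option sets agree, which forces $g(n)=g(n+p)$ and closes the induction.

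The main (indeed only nontrivial) obstacle is precisely this splitting step: one must guarantee that in every split of a sufficiently large heap at least one part has index exceeding $n_0$ so that the induction hypothesis can be applied to it after the $\pm p$ shift, while the other, possibly tiny, part is held fixed so that its arbitrary value cancels identically on both sides. The constant $2n_0$ is calibrated exactly so that halving a heap of size exceeding $2n_0$ leaves the larger half above $n_0$, and the additional $+p+k$ in the window absorbs both the shift by $p$ and the removal of up to $k$ tokens. In writing this up I would also remark that for $n>k$ the ``remove all'' move is unavailable, so no exceptional option to the empty position needs separate handling, and that because the pairing fixes the small heap $a$ (resp. $a'$) it never matters whether its index is below $n_0$.
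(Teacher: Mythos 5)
Your proof is correct, and it is precisely the standard induction-on-$n$ argument that the paper itself invokes: the paper gives no proof of its own, remarking only that ``the proof follows by a simple induction on $n$'' and deferring to Siegel, whose argument is exactly your option-pairing scheme (shift the larger half of each split by $p$, hold the small half fixed, and let the window $2n_0+p+k$ absorb the halving, the shift by $p$, and the removal bound $k$). Your handling of the edge cases --- the unavailability of the remove-all move for $n>k$ and the irrelevance of the small heap's index --- matches the standard treatment, so there is nothing to flag.
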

The proof follows by a simple induction on $n$. For a proof and a more extensive survey, see $\cite{siegel}$. A prominent conjecture in combinatorial game theory, initially proposed by John Conway, is the following:
\begin{conjecture}
All bounded octal games are periodic.
\end{conjecture}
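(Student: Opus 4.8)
The plan is to route the conjecture through the threshold criterion of Theorem~\ref{periodicity}. That theorem does the decisive bookkeeping: for a fixed bounded octal game $G_n$ with removal bound $k$, it shows that repetition of the Sprague--Grundy sequence with period $p$ throughout the finite window $n_0 \le n \le 2n_0 + p + k$ forces repetition everywhere. Consequently the whole conjecture collapses to a single statement --- that for \emph{every} bounded octal game the sequence $\{\nimber(G_n)\}_{n\ge 0}$ is eventually periodic at all --- and once a candidate period is exhibited, Theorem~\ref{periodicity} certifies it automatically. So I would aim to prove eventual periodicity, and I would organize the attempt around the two obstructions that any such proof must defeat.

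The first obstruction is that the nim-values might not even be bounded. If one could show $\nimber(G_n) \le M$ for a constant $M = M(k)$, the sequence would range over a finite alphabet and a pigeonhole-style argument could hope to force recurrence, exactly as in the finite-subtraction-game result proved above. But boundedness of the nim-values of a bounded octal game is itself an open problem: a split move from a heap of size $n$ removes at most $k$ tokens yet may divide the remainder into $a+b$ in roughly $n/2$ distinct ways, so the option set --- and hence the $\mathrm{mex}$ --- grows with $n$, and there is no a priori ceiling on the values.

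The second, and deeper, obstruction is the memory of the recurrence. For a purely subtractive game the value $\nimber(G_n)$ depends only on a bounded window $\nimber(G_{n-1}),\dots,\nimber(G_{n-k})$, which is precisely what made the finite-state pigeonhole argument go through earlier. A split move, however, reaches a position with nimber $\nimber(G_a)\oplus\nimber(G_{n-r-a})$ in which only $a+r$ is constrained; here $a$ can be as small as $1$ and $n-r-a$ as large as $n-r-1$. Thus $\nimber(G_n)$ is governed by XOR-combinations drawn from the \emph{entire} prefix of the sequence, not from a bounded tail. My plan would be to carry along an enlarged state --- the last $k$ nim-values together with the set of XOR values reachable by splits --- and to argue this state must recur; but this set is a function of the full, unbounded history, so no finite-state pigeonhole applies directly.

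The hard part is exactly that these two obstructions reinforce each other, which is why this is a celebrated open problem going back to Conway rather than a routine induction. Theorem~\ref{periodicity} furnishes a \emph{verifier} --- compute the sequence, test the window --- but offers no guarantee that the test will ever succeed, and no structural theorem is known that bounds the pre-period or even forces a period to exist. Empirically, certain small bounded octal games (the code $0.007$ is the standard example) have been tabulated over vast ranges with no period yet found and their status still open. I therefore do not expect to close the gap here: the essential difficulty is to prove that the threshold test of Theorem~\ref{periodicity} must eventually certify periodicity, and that implication is precisely what remains unproven.
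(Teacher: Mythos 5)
This statement is Conway's conjecture, which the paper deliberately presents as a \emph{conjecture}: it offers no proof, and indeed no proof is known, so there is no argument of the paper's to compare yours against. Your proposal is honest on exactly this point --- it does not claim to prove the statement, and its assessment is correct. The reduction you describe is sound as far as it goes: Theorem~\ref{periodicity} converts any verified repetition over the window $n_0 \leq n \leq 2n_0 + p + k$ into periodicity for all $n$, so the conjecture is equivalent to the assertion that this finite test eventually succeeds for every bounded octal game; but, as you say, the theorem is only a per-instance verifier and gives no handle on the universal statement. Your two obstructions are the genuine ones and match the paper's own framing (``it offers no upper bound on the period and computational verification on a large scale is mostly intractable''): boundedness of the nim-value sequence of an octal game with splitting moves is itself open, and the split option $\nimber(G_a)\oplus\nimber(G_{n-r-a})$ makes $\nimber(G_n)$ depend on the full prefix of the sequence rather than a window of width $k$, which is precisely what breaks the finite-state pigeonhole argument that worked for the finite subtraction games earlier in the paper. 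Your example of code $0.007$ as a long-tabulated game with no period found is also a standard one. In short: there is no gap to repair in your write-up because you correctly identify that the implication ``the threshold test must eventually certify periodicity'' is the open content of the conjecture; any referee should read your submission as a correct statement of why the problem is open, not as a failed proof.
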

The conjecture is convincing, but it offers no upper bound on the period and computational verification on a large scale is mostly intractable.
\par Disappointingly, other than through Theorem \ref{periodicity} and computational search, we don't have a way to prove that a sequence of games will be periodic, even given that a sequence with almost identical structure is periodic. We believe however that this is a promising direction. Consider the following game:
\begin{definition}
$STARKAYLES_{k,n}$ is the game $GRAPH_G$, where $G$ is obtained by starting with a star graph on $k$ vertices, and then extending one of the branches to be a path of edge-length $n$.
\end{definition}
\par Observe that $STARKAYLES_{1_n}$ is the same as $KAYLES_n$. We have computationally verified for small values of $k$ that $STARKAYLES_{k,n}$ is periodic, with period a multiple of $12$. We conjecture that this generalizes, since the fixed star should not intuitively have a structural effect on the asymptotic behaviour of the sequence.
\begin{conjecture}
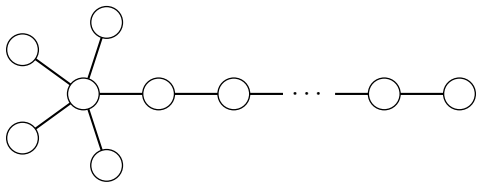
\begin{figure}
\centering
\begin{tikzpicture}
\renewcommand*{\VertexInterMinSize}{12pt}
\SetVertexNoLabel
\Vertex{Z} \Vertices{circle}{A,B,C,D,E} \EA(A){F} {\GraphInit[vstyle=Empty]\SetVertexLabel \EA[L=$\cdots$](F){G}} \EA(G){H} \EA(H){I}

\Edge(Z)(A) \Edge(Z)(B) \Edge(Z)(C) \Edge(Z)(D) \Edge(Z)(E) \Edge(A)(F) \Edge(F)(G) \Edge(G)(H) \Edge(H)(I)
\end{tikzpicture}
\caption{The game $STARKAYLES_{5,n}$. A valid move is picking a vertex, and removing a positive number of edges from it. We conjecture that all games of this form will be periodic.}
\end{figure}
For all $k$, $STARKAYLES_{k,n}$ is periodic, with period a multiple of $12$.
\end{conjecture}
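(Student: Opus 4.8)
The plan is to reduce the periodicity of $STARKAYLES_{k,n}$ in the parameter $n$ to a finite-memory recurrence driven by the (eventually period-$12$) Sprague-Grundy sequence of $KAYLES$, and then to apply a threshold argument in the spirit of \Cref{periodicity}. First I would record the structural fact that $GRAPH_G$ respects disjoint unions: since a single move alters edges at one vertex of the underlying forest, the Sprague-Grundy value of any position is the $\ns$ (nim-sum) of the values of its connected components. Applying this to $STARKAYLES_{k,n}$, every move either (i) acts on the fixed star, passing to $STARKAYLES_{k',n}$ for some $k'<k$, or (ii) cuts the tail at some tail-vertex, detaching a free path $P_\ell$ (a $KAYLES$ position) and leaving either a shorter $STARKAYLES_{k,m}$ with $m<n$ or a fixed star remnant summed with a detached path. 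Writing $G_n := \nimber(STARKAYLES_{k,n})$ and $K(\ell):=\nimber(KAYLES_\ell)$, this yields a recurrence of the schematic form
\[
G_n = \mathrm{mex}\Big( \{\,G_m \ns K(n-m-1)\,:\,0\le m<n\,\}\ \cup\ \{\,G^{(k')}_n\,:\,k'<k\,\}\ \cup\ \{\,d_s \ns K(n-\epsilon_s)\,\}\Big),
\]
where the $G^{(k')}$ are the analogous sequences for smaller stars, the $d_s$ range over a fixed finite set of star-remnant values, and $\epsilon_s\in\{0,1\}$.

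The second step, and the main obstacle, is to prove that the sequence $(G_n)_n$ takes only boundedly many values. This is genuinely the crux: as the authors note, periodicity of $A_n$ need not survive the $\ws$ construction, and for these self-referential mex recurrences boundedness of the Grundy values is essentially equivalent to periodicity. I would attack it by induction on $k$ — the base case of small $k$ being $KAYLES$ itself, which is bounded because it is periodic — while isolating the problematic self-reference coming from the $G_m$ terms at the same value of $k$. The cleanest route is to argue that $STARKAYLES_{k,n}$ is a bounded taking-and-breaking game: every move removes a bounded number of edges and breaks the position into a bounded number of pieces (the maximum vertex degree is $O(k)$, fixed), so one can set up a generalized octal-style encoding to which a threshold theorem analogous to \Cref{periodicity} applies. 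Establishing that threshold theorem — proving the implication ``periodic up to an explicit bound $\Rightarrow$ periodic forever'' for this graph game — is where the real work lies; once it is in place, boundedness and periodicity follow from a finite computation.

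Granting boundedness, eventual periodicity drops out as follows. Split $K$ into its finite pre-periodic part, supported on indices $\ell\le \ell_0$, and its period-$12$ tail. The ``bulk'' contribution $\{G_m \ns K(n-m-1): n-m-1>\ell_0\}$ depends only on the sets $V_r=\{G_m : m\equiv r \pmod{12}\}$, which are nondecreasing and bounded, hence stabilize; after stabilization this contribution depends on $n$ only through $n \bmod 12$. The remaining ``window'' contribution involves only the last $\ell_0$ values $G_{n-1},\dots,G_{n-\ell_0}$ (through the exceptional $K$-values) together with the finitely many smaller-star terms, so the pair (recent window, $n\bmod 12$) evolves as a finite-state machine and is eventually periodic by the pigeonhole principle exactly as in \Cref{periodicity}.

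Finally, to obtain that the period is a multiple of $12$, I would argue that $12$ necessarily divides the period while the window part may only enlarge it. The stabilized bulk term genuinely cycles through all twelve residues of $n$ because the period-$12$ tail of $KAYLES$ is nondegenerate, which forces $12$ to be a period of the bulk contribution and hence of $G_n$; the finite-state window part contributes a further, a priori larger, period, and the overall period is the least common multiple, a multiple of $12$. Pinning down that the bulk part does not accidentally collapse to a proper divisor of $12$ — and, more importantly, discharging the threshold/boundedness step above — are the two places where I expect the argument to demand real care rather than routine bookkeeping.
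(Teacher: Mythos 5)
You should first be aware that the paper does not prove this statement at all: it is stated as a \emph{conjecture}, supported only by computational verification for small $k$, and the paper explicitly identifies the missing ingredient as an open problem (its Question on proving $STARKAYLES_{2,n}$ periodic without Theorem~\ref{periodicity}, and its remark that ``other than through Theorem~\ref{periodicity} and computational search, we don't have a way to prove that a sequence of games will be periodic''). So there is no paper proof to match, and your proposal must be judged as an attempted resolution of an open conjecture. Its sound parts are real: the disjunctive decomposition of $GRAPH_G$ into connected components is valid (a move touches edges at one vertex only, so the Sprague--Grundy theorem applies), the resulting mex-recurrence driven by the Kayles sequence is essentially correct, and your ``granting boundedness, pigeonhole on (window, $n \bmod 12$) gives eventual periodicity'' step is the same finite-state argument the paper uses to prove its Theorem~5 for finite subtraction games.

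The genuine gaps are exactly where you flag ``real work,'' and they are not bookkeeping. First, the boundedness/threshold step you defer is essentially the content of Conway's conjecture that all bounded octal games are periodic, specialized to this game --- the paper states that conjecture as open, and notes that bounded removal alone does not yield bounded nimbers (single-heap Nim is octal and unbounded). Moreover $STARKAYLES_{k,n}$ is not literally an octal game: positions carry a star remnant attached to the path, so Theorem~\ref{periodicity} does not apply as stated, and ``a generalized octal-style encoding to which an analogous threshold theorem applies'' is precisely the unproved assertion; your argument is circular at this point, since boundedness of the self-referential $G_m$ terms is what the threshold theorem would be needed to deliver. Second, even granting a threshold theorem, your plan concludes via ``a finite computation'' per $k$; a computation for each fixed $k$ cannot establish the universally quantified statement ``for all $k$'' unless the verifications are made uniform in $k$, which nothing in the proposal provides (the paper itself only verified small $k$). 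Third, the ``period is a multiple of $12$'' claim needs a nondegeneracy argument: the mex of the bulk-plus-window union can collapse across residues, so $12$ dividing the period of the inputs does not force $12$ to divide the period of $G_n$; you acknowledge this but offer no mechanism to rule out collapse. As it stands, the proposal is a reasonable research program whose decisive steps coincide with the obstructions the paper names, not a proof.
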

To move beyond computational verification, we suggest the following direction of research:
\begin{question}
Can we prove that $STARKAYLES_{2,n}$ is periodic, without relying on Theorem \ref{periodicity}, and only on the fact that $KAYLES_n$ is periodic?
\end{question}
Of course, there should not be anything special about starting with a star as opposed to any other fixed graph, and extending a path of length $n$ from a vertex. However,  $STARKAYLES_{2,n}$ seems to be the simplest extension to $KAYLES$ that also preserves periodicity.
\par The operation $\ws(*k)$ cannot model attaching a fixed graph to a vertex in $KAYLES_n$; however, it's similar. We also conjecture the following:
\begin{conjecture}
$KAYLES_n\ws(*1)$ is periodic.
\end{conjecture}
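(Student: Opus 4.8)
The plan is to follow the template that already yields periodicity for finite subtraction games and for bounded octal games (\Cref{periodicity}): bound the Sprague-Grundy values, argue that each value is determined by a finite amount of bounded ``earlier'' data, and then invoke the pigeonhole principle to extract a period. A position of $KAYLES_n\ws(*1)$ is a pair $(K,*j)$ with $j\in\{0,1\}$ and $K$ a disjunctive sum of paths. Once the joker stone is spent ($j=0$) the game is ordinary $KAYLES$, whose nimbers are bounded and periodic with period $12$ \cite{siegel}; so the whole difficulty lives in the positions $K\ws(*1)$ with the stone still present. First I would record the mex recursion
$$\nimber(K\ws(*1))=\mathrm{mex}\Big(\{\nimber(K'\ws(*1)):K\to K'\}\cup\{\nimber(K)\}\cup\{\nimber(K'):K\to K'\}\Big),$$
where $K\to K'$ ranges over single $KAYLES$ moves and the three sets correspond to moving in the graph only, spending the stone only, and doing both at once.

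The central obstruction is that $\ws(*1)$ does \emph{not} distribute over the disjunctive sum $\ns$, so $\nimber(K\ws(*1))$ is not a function of the ordinary Grundy value $\nimber(K)$. For instance, among $KAYLES$ positions the path $P_2$ has Grundy value $2$ while the two-edge position $P_1\ns P_1$ has Grundy value $0$, yet both satisfy $\nimber(\,\cdot\,\ws(*1))=3$. Since a $KAYLES$ move $P_n\to P_a\ns P_{a'}$ (with $a+a'<n$) forces the recursion to reference the $\ws(*1)$-values of \emph{split} positions, and these cannot be recovered from the Grundy values of the pieces alone, the single-heap window argument used for finite subtraction games does not transfer verbatim.

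To get around this I would search for a finite invariant $s(K)$, living in a fixed finite set $\Sigma$, with three properties: (i) $s$ respects disjunctive sums, i.e. $s(K_1\ns K_2)$ is a function of $s(K_1)$ and $s(K_2)$; (ii) $\nimber(K\ws(*1))$ is a function of $s(K)$ alone; and (iii) $s(P_n)$ is eventually periodic in $n$. The natural candidate for $s(K)$ records, for each of the finitely many $KAYLES$ path-values, the number of components of $K$ carrying that value, reduced modulo a small period; equivalently, $\Sigma$ is a quotient of the disjunctive-sum monoid of $KAYLES$ positions by the congruence ``indistinguishable after appending a joker.'' Small computations support this: the disjunctive sum of $m$ copies of $*1$ appended with a joker has nimber $2$ for $m$ odd and $3$ for $m$ even, so its $\ws(*1)$-value is bounded and depends only on $m\bmod 2$. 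Granting (i)--(iii), periodicity of $KAYLES$ makes $s(P_n)$ eventually periodic, whence $\nimber(KAYLES_n\ws(*1))=F(s(P_n))$ is eventually periodic, as desired.

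The hard part will be establishing (ii) together with the finiteness of $\Sigma$, which amounts to proving that $\nimber(K\ws(*1))$ is bounded over \emph{all} $KAYLES$ positions $K$ and that the associated congruence has finite index. Boundedness does not follow from a naive induction on the mex recursion---if the reachable nimbers were only known to lie in $\{0,\dots,B\}$ the mex could still reach $B+1$---so, exactly as in the proof that $KAYLES$ itself has bounded nimbers, one must exploit the period-$12$ structure of the component values to control the joker's contribution. I expect this boundedness-and-finite-index step to be the genuine obstacle; once it is in hand, properties (i) and (iii) are routine, and the pigeonhole conclusion mirrors the finite subtraction game and octal arguments already used in this paper.
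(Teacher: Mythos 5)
You are attempting to prove what the paper states only as an open conjecture (its final conjecture, on $KAYLES_n\ws(*1)$); the paper offers no proof and even remarks that the statement is virtually impossible to verify computationally, precisely because $\ws(*1)$ destroys the XOR decomposition of disjoint Kayles components. So there is no paper argument to compare against, and your text must stand on its own as a proof. It does not: it is a strategy outline whose decisive steps are exactly the open content of the conjecture. The properties you label (ii) and ``finiteness of $\Sigma$'' --- boundedness of $\nimber(K\ws(*1))$ over \emph{all} Kayles positions $K$, and finite index of the congruence ``indistinguishable after appending a joker'' --- are flagged by you as ``the genuine obstacle'' and never established. Granting them, the pigeonhole conclusion is indeed routine and mirrors the paper's finite-subtraction argument, but that is where the entire difficulty has been relocated, not resolved. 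Note also that the finite look-back window that powers both the paper's subtraction-game theorem and \Cref{periodicity} is unavailable here: a single Kayles move can split a path, so the recursion references positions with unboundedly many components rather than a bounded window of earlier single-row values.

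There is also a concrete defect in the candidate invariant. Your proposed $s(K)$, counting components by their Kayles path-value modulo a small period, almost certainly fails your own property (ii), for the very reason you identify as the central obstruction: the joker value $\nimber(K\ws(*1))$ is not a function of the ordinary Grundy values of the components, so two positions with identical $s$ can have different joker values. The paper's introductory example already exhibits this in the correct direction: $\nimber(*1\ws *0)=1\neq\nimber(*1\ws(*1\ns *1))=3$ although $*0$ and $*1\ns *1$ have the same Grundy value $0$. (Your $P_2$ versus $P_1\ns P_1$ example shows the opposite configuration --- different Grundy values, equal joker values --- which does not demonstrate non-functionality.) Calling the count-based $s$ ``equivalently'' the quotient by the joker congruence is likewise not justified: the quotient is the right object to study, but nothing in the proposal shows that it factors through component Grundy values, that it has finite index, or that it interacts with splitting moves in a controllable way. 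In short, the proposal correctly diagnoses why the naive window argument fails but does not supply the missing boundedness or finite-index lemma, and the conjecture remains open.
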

This conjecture is virtually impossible to computationally verify, since computing nimbers involve looking at roughly $P(n)$ (partition number of $n$) many games (which is exponential in $n$), as the $\ws(*1)$ prevents us from calculating the nimber of a disjoint union of $KAYLES$ games by simply XORing the nimbers. We hope that techniques that can address Question $6$ can generalize to prove Conjecture $3$.

\newpage

\end{document}